\definecolor{rouge}{rgb}{0.7,0.00,0.00}
\definecolor{vert}{rgb}{0.00,0.5,0.00}
\definecolor{bleu}{rgb}{0.00,0.00,0.8}
\newtheorem{theorem}{Theorem}[section]
\newtheorem*{theorem*}{Theorem}
\newtheorem{lemma}[theorem]{Lemma}
\newtheorem{definition}[theorem]{Definition}
\newtheorem{proposition}[theorem]{Proposition}
\newtheorem{condition}{Condition}
\newtheorem{conditionA}{A\kern-0.1mm}
\newtheorem{conditionB}{B\kern-0.1mm}
\theoremstyle{definition}
\newtheorem{remark}[theorem]{Remark}
\def \eref#1{\hbox{(\ref{#1})}}
\numberwithin{equation}{section}
\def\geq{\geqslant}
\def\leq{\leqslant}
\def\RR{\mathbb{R}}
\def\PP{\mathbb{P}}
\def\EE{\mathbb{E}}
\def\vare{{\varepsilon}}
\def \eref#1{\hbox{(\ref{#1})}}
\def\EE{\mathbb{ E}}
\begin{document}

\title[Well-posedness and averaging principle of McKean-Vlasov SPDEs ]
{Well-posedness and averaging principle of McKean-Vlasov SPDEs driven by cylindrical $\alpha$-stable process}

\author{Mengyuan Kong}
\curraddr[Kong, M.]
{School of Mathematics and Statistics, Jiangsu Normal University, Xuzhou, 221116, China}
\email{mykong@jsnu.edu.cn}

\author{Yinghui Shi}
\curraddr[Shi, Y.]
{School of Mathematics and Statistics and Research Institute of Mathematical Science, Jiangsu Normal University, Xuzhou, 221116, China}
\email{shiyinghui@jsnu.edu.cn}

\author{Xiaobin Sun}
\curraddr[Sun, X.]{ School of Mathematics and Statistics and Research Institute of Mathematical Science, Jiangsu Normal University, Xuzhou, 221116, China}
\email{xbsun@jsnu.edu.cn}

\begin{abstract}
In this paper, we first study the well-posedness of a class of McKean-Vlasov stochastic partial differential equations driven by cylindrical $\alpha$-stable process, where $\alpha\in(1,2)$. Then by the method of the Khasminskii's time discretization, we prove the averaging principle of a class of multiscale McKean-Vlasov stochastic partial differential equations driven by cylindrical $\alpha$-stable processes. Meanwhile, we obtain a specific strong convergence rate.
%which extents the model considered in \cite{BYY} to the model of the distribution dependent coefficients.
\end{abstract}

\date{\today}
\subjclass[2010]{ Primary 35R60}
\keywords{Stochastic partial differential equations; Averaging principle; Multiscale; McKean-Vlasov;  Cylindrical $\alpha$-stable process.}

\maketitle

\section{Introduction}

Let $H$ be a Hilbert space with the inner product $\langle\cdot,\cdot\rangle$ and the norm $|\cdot|$. Let $\mathscr{P}$ be the set of all probability measures on $(H, \mathscr{B}(H))$. For any $p\geq 1$, define
$$
\mathscr{P}_p:=\Big\{\mu\in \mathscr{P}: \mu(|\cdot|^p):=\int_{H}|x|^p\mu(dx)<\infty\Big\}.
$$
Then space $(\mathscr{P}_p, \mathbb{W}_p)$ is a complete metric space, where $\mathbb{W}_p$ is the $L^p$-Wasserstein distance, i.e.,
$$
\mathbb{W}_p(\mu_1,\mu_2):=\inf_{\pi\in \mathscr{C}_{\mu_1,\mu_2}}\left[\int_{H\times H}|x-y|^p\pi(dx,dy)\right]^{1/p}, \quad \mu_1,\mu_2\in\mathscr{P}_p,
$$
where $\mathscr{C}_{\mu_1,\mu_2}$ is the set of all couplings for $\mu_1$ and $\mu_2$.

\vspace{0.1cm}
We first consider the following McKean-Vlasov stochastic partial differential equations (SPDEs for short) in $H$:
\begin{equation}\label{main equation0}\left\{\begin{array}{l}
\displaystyle dX_t=AX_tdt+B(X_t,\mathscr{L}_{X_t})dt+dL_t,\\
X_0=\xi\in H,\end{array}\right.
\end{equation}
where $\mathscr{L}_{X_t}$ is the law of random variable $X_t$, $A:\mathcal{D}(A)\subset H\to H$ is a self-adjoint operator, which is the infinitesimal generator of a linear strongly continuous
semigroup $(e^{tA})_{t\ge0}$. The process $L=(L_t)_{t\geq0}$ is a cylindrical $\alpha$-stable process with $\alpha\in(1,2)$ defined on a probability space $(\Omega,\mathcal{F},\mathbb{P})$ with filtration $(\mathcal{F}_t)_{t\geq 0}$ . $\xi$ is an $H$-valued $\mathcal{F}_0$-measurable random variable, map $B: H\times\mathscr{P}_p \rightarrow H$ satisfies proper condition.

\vspace{0.1cm}
The McKean-Vlasov stochastic differential equations (SDEs for short), also called distribution dependent SDEs, describe stochastic
systems whose evolution is determined by both the microcosmic location and the macrocosmic distribution of the particle, see \cite{Mc}.  When the noise is the classical Brownian motion, the well-posedness of such kind of stochastic equations have been studied intensively (see e.g. \cite{HW2019, HY, MV, RZ, WFY} for finite dimension and  e.g. \cite{AD, GA, HL,HS2021,RW} for infinite dimension). Further properties of the solution, such as large deviations, ergodicity, Harnack inequality and the Bismut formula for the Lions Derivative, which also have been investigated in many references (see e.g. \cite{BRW, LMW, RW, RW2019, Song}).

\vspace{0.1cm}
However, to the authors' knowledge, it seems no result about the well-posedness of the  McKean-Vlasov SPDEs driven by $\alpha$-stable process. Meanwhile, considering the cylindrical $\alpha$-stable process has theoretically meaningful, for instance such kind of processes can be used to model systems with heavy tails in physics. Hence, the first purpose of this paper is study the well-posedness of equation \eref{main equation0}. The main method is based on the classical contraction mapping principle.

\vspace{0.1cm}
Our second purpose is further to study the averaging principle for the following multiscale McKean-Vlasov SPDEs driven by cylindrical $\alpha$-stable processes:
\begin{equation}\left\{\begin{array}{l}\label{MSEQ}
\displaystyle
dX^{\vare}_t=\left[AX^{\vare}_t+F(X^{\vare}_t, \mathscr{L}_{X^{\vare}_t}, Y^{\vare}_t)\right]dt+dL_t,\quad X^{\vare}_0=\xi\in H,\\
dY^{\vare}_t=\frac{1}{\vare}[AY^{\vare}_t+G(X^{\vare}_t, \mathscr{L}_{X^{\vare}_t},Y^{\vare}_t)]dt+\frac{1}{\vare^{1/\alpha}}dZ_t,\quad Y^{\vare}_0=\eta\in H,\end{array}\right.
\end{equation}
where $\varepsilon >0$ is a small parameter describing the ratio of time scales between the slow component $X^{\varepsilon}$
and fast component $Y^{\varepsilon}$. Measurable functions $F,G:H\times\mathscr{P}_p\times H\rightarrow H$ satisfy some appropriate conditions, and $\{L_t\}_{t\geq 0}$ and $\{Z_t\}_{t\geq 0}$ are mutually independent cylindrical $\alpha$-stable process with $\alpha\in (1,2)$ defined on a filtered probability space $(\Omega,\mathcal{F},(\mathcal{F}_t)_{t\geq 0},\mathbb{P})$. $\xi$ and $\eta$ are two $H$-valued $\mathcal{F}_0$-measurable random variables.

\vspace{0.1cm}
The averaging principle of stochastic system \eref{MSEQ} is to describe the asymptotic behavior of the slow component $X^{\vare}_t$ as $\vare\to 0$, which says that the slow component will converge to the so-called averaged equation in various senses. In this paper we intend to prove the slow component $X^{\varepsilon}$ convergent to $\bar{X}$ in the strong sense, i.e., for any initial values $\xi$ and $\eta$ which are two $H$-valued random variables having finite $k$-th moment for $m\in [1,\alpha)$, one tries
to find a constant  $r>0$  such that  for $T>0$
\begin{align}
\left[\mathbb{E}\left(\sup_{t\in [0,T]}|X_{t}^{\vare}-\bar{X}_{t}|^{m}\right)\right]^{1/m}\leq C\vare^{r}, \label{Mainresult}
\end{align}
where  $C$ is a positive constant only depends on $T$, $\xi$ and $\eta$, and $\bar{X}$ is the solution of the corresponding averaged equation (see  equation \eref{1.3} below).

\vspace{0.1cm}
The theory of averaging principle was first developed for the  ordinary differential equations by Bogoliubov and Mitropolsky \cite{BM}, and extended to SDEs  by Khasminskii \cite{K1} and SPDEs by Cerrai and Freidlin \cite{CF}. Nowadays, the averaging principle for slow-fast stochastic system has been drawn much attentions, see e.g. \cite{B1,C1,DSXZ,FLL,GP4,GD,LRSX1,PS,PXY,RX,V0,WR,XML1} and the references therein.

\vspace{0.1cm}
In the distribution-independent case, the averaging principle for multiscale SDEs driven by $\alpha$-stable processes has been studied in a number of papers.
For instance, Bao et al. \cite{BYY} prove the strong averaging principle for two-time scale SPDEs driven by $\alpha$-stable processes. Chen et al. \cite{CSS} prove the strong averaging principle for stochastic Burgers equations. Sun et al. \cite{SXX} prove the strong and weak convergence rates for a class of multiscale SDEs driven by $\alpha$-stable processes. Sun and Zhai \cite{SZ} prove the strong averaging principle for stochastic Ginzburg-Landau equation. Sun et al. \cite{SXXZ} prove the strong averaging principle for a class of singular SPDEs
driven by $\alpha$-stable processes.

\vspace{0.1cm}
In the distribution dependent case, it seems there are few results. For example, R\"{o}ckner et al. \cite{RSX} study the averaging principle for a class of slow-fast McKean-Vlasov
stochastic differential equations driven by Wiener noise. However unlike the Wiener noise, the cylindrical  $\alpha$-stable process only has finite $p$-th moment for $p\in(0,\alpha)$, thus some methods developed in \cite{RSX} are not suitable to treat the cylindrical $\alpha$-stable noises, therefore we require new and different techniques to deal with the cylindrical $\alpha$-stable noise.
%Meanwhile, the cylindrical  $\alpha$-stable process has numerous applications in physics because such processes can be used to model systems with heavy tails.

\vspace{0.1cm}
Note that the exponential ergodicity of the transition semigroup of the corresponding frozen equation plays an important role in the proof of strong averaging convergence.
However it does not hold if the coefficient $G$ in the fast component depends on the law of the fast
component. In this situation, the corresponding frozen equation is also a McKean-Vlasov SPDEs. As a result, the corresponding Markov operator of the frozen equation $P_tf(y):=\EE f(Y^{x,\mu,y}_t)$ is not a semigroup anymore (see \cite[(1.11)]{WFY}), where $\{Y^{x,\mu,y}_t\}_{t\geq 0}$ denotes the unique solution of the frozen equation by fixed $x\in H,\mu\in \mathscr{P}_p$. Then it will brings some essential difficulties. Consequence, we focus on the coefficients depending only on the law of the slow component here.
%The coefficients depending  on the law of the fast component is left for our further work.

\vspace{0.1cm}
The paper is organized as follows. In section 2, we study the well-posedness of a class of McKean-Vlasov SPDEs driven by $\alpha$-stable processes. In sections 3, we study the averaging principle for a class of multiscale Mckean-Vlasov SPDEs. Section 4 is the appendix.

\vspace{1mm}
We note that throughout this paper $C$ and $C_T$  denote positive constants which may change from line to line, where the subscript $T$ is used to emphasize that the constant depends on $T$.

\section{Well-posedness of McKean-Vlasov SPDEs driven by $\alpha$-stable process}

In this section, we first give some notations and assumptions. Then we prove the existence and uniqueness of the mild solution of a class of McKean-Vlasov SPDEs driven by $\alpha$-stable processes.

\subsection{Notations and assumptions}

We recall the main equation
\begin{equation}\label{main equation}\left\{\begin{array}{l}
\displaystyle dX_t=AX_tdt+B(X_t,\mathscr{L}_{X_t})dt+dL_t,\\
X_0=\xi\in H,\end{array}\right.
\end{equation}
where $\{L_t\}_{t\geq 0}$ is a cylindrical $\alpha$-stable process with $\alpha\in(1,2)$, which is given by
$$L_t=\sum^{\infty}_{k=1}\beta_{k}L^{k}_{t}e_k,\quad t\geq 0$$
where $\{e_k\}^{\infty}_{k=1}$ is a complete orthonormal basis of $H$, $\{\beta_k\}^{\infty}_{k=1}$ is a given sequence of positive numbers and $\{L^{k}_t\}^{\infty}_{k=1}$ is a sequence of independent one dimensional symmetric $\alpha$-stable processes satisfies for any $k\geq 1$ and $t\geq0$,
$$\mathbb{E}[e^{iL^k_{t}h}]=e^{-t|h|^{\alpha}}, \quad h\in \mathbb{R}.$$

We suppose that the following conditions hold:

\begin{conditionA}\label{A1} $A$ is a selfadjoint operator which satisfies $Ae_k=-\lambda_k e_k$ with $\lambda_k>0$ and $\lambda_k\uparrow \infty$, as $k\uparrow \infty$, where $\{e_k\}^{\infty}_{k=1}\subset \mathscr{D}(A)$.
\end{conditionA}

\begin{conditionA}\label{A2}
Suppose that there exists a constant $p\in [1,\alpha)$ such that
$$
B(\cdot,\cdot):H\times \mathscr{P}_p\rightarrow H.
$$
Furthermore, there exists $C>0$ such that for any $x,y\in H$, $\mu,\nu\in \mathscr{P}_p$,
\begin{eqnarray*}
\left|B(x, \mu)-B(y,\nu)\right|\leq C\left[|x-y|+\mathbb{W}_p(\mu,\nu)\right].
\end{eqnarray*}
\end{conditionA}

\begin{conditionA}\label{A3} Assume that $\sum^{\infty}_{k=1}\frac{\beta^{\alpha}_k}{\lambda_k}<\infty$.
\end{conditionA}

For any $s\in\RR$, we define
 $$H^s:=\mathscr{D}((-A)^{s/2}):=\left\{u=\sum^{\infty}_{k=1}u_ke_k: u_k\in \mathbb{R},~\sum^{\infty}_{k=1}\lambda_k^{s}u_k^2<\infty\right\}$$
and
 $$(-A)^{s/2}u:=\sum^{\infty}_{k=1}\lambda_k^{s/2} u_ke_k,\quad u\in\mathscr{D}((-A)^{s/2}),$$
with the associated norm $\|u\|_{s}:=|(-A)^{s/2}u|=\left(\sum^{\infty}_{k=1}\lambda_k^{s} u^2_k\right)^{1/2}$. It is easy to see $\|\cdot\|_0=|\cdot|$.

The following smoothing properties of the semigroup $e^{tA}$ (see \cite[Proposition 2.4]{B1}) will be used quite often later in this paper:
\begin{eqnarray}
&&|e^{tA}x|\leq e^{-\lambda_1 t}|x|, t\geq 0,\label{P2}\\
&&\|e^{tA}x\|_{\sigma_2}\leq C_{\sigma_1,\sigma_2}t^{-\frac{\sigma_2-\sigma_1}{2}}e^{-\frac{\lambda_1 t}{2}}\|x\|_{\sigma_1},\quad x\in H^{\sigma_2},\sigma_1\leq\sigma_2, t>0,\label{P3}\\
&&|e^{tA}x-x|\leq C_{\sigma}t^{\frac{\sigma}{2}}\|x\|_{\sigma},\quad x\in H^{\sigma},\sigma>0, t\geq 0.\label{P4}
\end{eqnarray}

\subsection{The proof of well-posedness of equation \eref{main equation}}

In this subsection, we shall prove the well-posedness of equation \eref{main equation}. To do this, we first give the definition of the solution.

\begin{definition} We call a predictable $H$-valued stochastic process $\{X_t\}_{t\geq 0}$ defined on the probability space $(\Omega,\mathcal{F},(\mathcal{F}_t)_{t\geq 0},\mathbb{P})$ with the initial value $\xi\in H$ a mild solution of equation (\ref{main equation}) if the following statements are satisfied:

i) $\{X_t\}_{t\geq 0}$ is  $\{\mathcal{F}_t\}_{t\geq 0}$ adapted.

ii) $\PP\left(\int^t_0 |e^{(t-s)A}B(X_s,\mathscr{L}_{X_s})|ds<\infty\right)=1,\quad \forall t\geq 0$.

iii) For any $t\geq 0$, $\{X_t\}_{t\geq 0}$ satisfies
\begin{eqnarray}
X_t=e^{tA}\xi+\int^t_0 e^{(t-s)A}B(X_s,\mathscr{L}_{X_s})ds+\int^t_0 e^{(t-s)A}dL_s,\quad \mathbb{P}-a.s..
\end{eqnarray}

\end{definition}

Next, we present our first main result.
\begin{theorem} \label{main result 1}
Suppose that assumptions \ref{A1}-\ref{A3} hold. Then for any initial value $\xi\in H$ satisfying $\EE|\xi|^{p}<\infty$, equation (\ref{main equation}) has a unique mild solution. Furthermore, if $\EE|\xi|^m<\infty$ for some $m\in [p,\alpha)$, then for any $T>0$, there exists $C_{T,m}>0$ such that
\begin{eqnarray}
\sup_{t\in [0,T]} \left(\mathbb{E}|X_t|^{m}\right)^{1/m}\leq C_{T}\left[1+\left(\EE|\xi|^m\right)^{1/m}\right]. \label{FM}
\end{eqnarray}
\end{theorem}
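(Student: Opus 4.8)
The plan is to establish existence and uniqueness via the Banach fixed-point (contraction mapping) principle, as the authors announced in the introduction, working on a suitable complete metric space of processes. For a fixed time horizon $T>0$, I would consider the space of predictable processes with finite $m$-th moment, but the delicate point is that the solution map must control both the process itself and its law, so the natural setting is a space $\mathcal{M}_T$ of (laws of) processes equipped with a metric built from $\sup_{t\in[0,T]}\mathbb{W}_p(\mathscr{L}_{X_t},\mathscr{L}_{Y_t})$ together with the pathwise distance. First I would verify that the stochastic convolution $L^A_t:=\int_0^t e^{(t-s)A}dL_s$ is well-defined and has finite $p$-th moment uniformly on $[0,T]$; this is exactly where assumption \ref{A3}, namely $\sum_k \beta_k^\alpha/\lambda_k<\infty$, enters, since for an $\alpha$-stable cylindrical noise the convergence of the series defining $L^A_t$ and the finiteness of $\mathbb{E}|L^A_t|^p$ for $p\in(0,\alpha)$ reduce to summability of $\beta_k^\alpha\lambda_k^{-1}$ after using the scaling of one-dimensional stable laws and \eref{P2}.

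Next I would define the map $\Gamma$ sending a candidate law-flow $\mu=(\mu_t)_{t\in[0,T]}$ to the law-flow of the solution of the now \emph{distribution-frozen} equation
\begin{eqnarray*}
X^\mu_t=e^{tA}\xi+\int_0^t e^{(t-s)A}B(X^\mu_s,\mu_s)\,ds+L^A_t,
\end{eqnarray*}
which is a classical (non-McKean-Vlasov) SPDE with a Lipschitz drift in the first variable by \ref{A2}. For this frozen equation, existence and uniqueness of a mild solution and the moment bound follow from a standard Picard iteration using the Lipschitz estimate of \ref{A2}, the contraction factor coming from $\int_0^t e^{-\lambda_1(t-s)}\,ds$ via \eref{P2}, and Gronwall's inequality; this simultaneously produces the finite-moment estimate \eref{FM} once I control $\mathbb{E}|\xi|^m$ and $\sup_t\mathbb{E}|L^A_t|^m$ for $m\in[p,\alpha)$. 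The fixed-point argument then runs on the law-flow: using \ref{A2} I would estimate
\begin{eqnarray*}
\mathbb{E}\,\big|X^{\mu}_t-X^{\nu}_t\big|^p
\leq C\int_0^t e^{-c(t-s)}\Big(\mathbb{E}|X^\mu_s-X^\nu_s|^p+\mathbb{W}_p(\mu_s,\nu_s)^p\Big)\,ds,
\end{eqnarray*}
and since $\mathbb{W}_p(\mathscr{L}_{X^\mu_t},\mathscr{L}_{X^\nu_t})^p\leq \mathbb{E}|X^\mu_t-X^\nu_t|^p$ by coupling through the same noise and initial datum, Gronwall yields a bound on the $\mathbb{W}_p$-distance of the output flows by the input flows; choosing $T$ small (or introducing an exponential weight $e^{-\beta t}$ in the metric) makes $\Gamma$ a contraction, and the unique fixed point is the desired solution, with the solution on $[0,T]$ obtained by patching finitely many short intervals.

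The main obstacle, and the place demanding genuine care, is the interplay between the low integrability of the $\alpha$-stable noise and the $L^p$-Wasserstein structure: because the cylindrical $\alpha$-stable process has only finite $p$-th moments for $p\in(0,\alpha)$, one cannot run the contraction in the more convenient $L^2$ or $L^\alpha$ framework, and all estimates must be carried out at the level of the $p$-th moment with $p<\alpha$, where the triangle-type inequalities for $|a+b|^p$ and the subadditivity $(a+b)^{p}\le a^p+b^p$ (valid for $p\le 1$) versus the convexity-based bounds (needed for $p>1$) behave differently. For the final assertion \eref{FM} with $m\in[p,\alpha)$ I would, after establishing existence, substitute the now-known law $\mathscr{L}_{X_s}$ back into the mild formula and bound $\mathbb{E}|X_t|^m$ directly; here the Lipschitz bound \ref{A2} gives $|B(X_s,\mathscr{L}_{X_s})|\le C(1+|X_s|+\mathbb{W}_p(\mathscr{L}_{X_s},\delta_0))\le C(1+|X_s|+(\mathbb{E}|X_s|^p)^{1/p})$, and a Gronwall argument in $\mathbb{E}|X_t|^m$, combined with the uniform bound on $\sup_{t\le T}\mathbb{E}|L^A_t|^m$ from \ref{A3}, closes the estimate.
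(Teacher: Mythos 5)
Your proposal is correct and follows essentially the same route as the paper: a two-level fixed point in which one first solves the distribution-frozen equation by contraction in an exponentially weighted $L^p$ space, then runs a second contraction on the law-flow in $C([0,T],\mathscr{P}_p)$ using the synchronous-coupling bound $\mathbb{W}_p(\mathscr{L}_{X^\mu_t},\mathscr{L}_{X^\nu_t})\leq\left(\EE|X^\mu_t-X^\nu_t|^p\right)^{1/p}$, with the moment bound \eref{FM} obtained afterwards by Gronwall. The only point you worry about unnecessarily is the $p\leq 1$ subadditivity issue: assumption \ref{A2} takes $p\in[1,\alpha)$, so all estimates go through with Minkowski's inequality at the level of $\left(\EE|\cdot|^p\right)^{1/p}$, exactly as in the paper.
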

\begin{proof}
%We will use the fixed point theory to prove the existence and uniqueness  of the solution.
The detailed proof is divided into three steps.

\textbf{Step 1:}  For the fixed $p\in [1,\alpha)$ in assumption \ref{A2}. Let $C([0, T], L^p(\Omega, \PP; H))$ be the Banach space of continuous maps $\{Z_t\}_{t\geq 0}$ from $[0,T]$ to $L^p(\Omega, \PP; H)$ satisfying $\sup_{t\in [0,T]}\EE |Z_t|^p<\infty$. Let $\Lambda_p$ be the closed subspace of $C([0, T], L^p(\Omega, \PP; H))$ consisting of measurable and $\{\mathcal{F}_t\}_{t\geq 0}$ adapted process $\{X_t\}_{t\geq 0}$. Then $\Lambda_p$ is a Banach space with the norm topology given by
$$
\|Z\|_{\Lambda_p}:=\sup_{t\in [0,T]}e^{-\lambda t}\left[\EE |Z_t|^p\right]^{1/p},
$$
where $\lambda>0$ is a large enough constant. We define by $C([0, T], \mathscr{P}_p)$ the complete metric space of continuous functions from $[0,T]$ to $(\mathscr{P}_p, \mathbb{W}_p)$ with the metric:
$$
D_T(\mu, \nu):=\sup_{t\in [0,T]}e^{-\lambda t}\mathbb{W}_p(\mu_t,\nu_t),\quad \mu,\nu \in C([0, T], \mathscr{P}_p).
$$

For a fixed $\mu\in C([0, T], \mathscr{P}_p)$, we first study the well-posedness of the following equation:
\begin{eqnarray}
dX_t=AX_t dt+B(X_t,\mu_t)dt+dL_t,\quad X_0=\xi.\label{E2.5}
\end{eqnarray}
Let us define the operator on $\Lambda_p$ as follows:
\begin{eqnarray*}
(\Phi_{\mu}X)(t)=e^{tA}\xi+\int^t_0 e^{(t-s)A}B(X_s,\mu_s)ds+\int^t_0 e^{(t-s)A}dL_s.
\end{eqnarray*}

On one hand, refer to \cite[(4.12)]{PZ}, the assumption \ref{A3} implies that
\begin{eqnarray}
\sup_{t\geq 0}\EE\left|\int^t_0 e^{(t-s)A}d L_s\right|^p\leq C_{\alpha,p}\left(\sum^{\infty}_{k=1}\frac{\beta^{\alpha}_k}{\lambda_k}\right)^{p/\alpha}<\infty,\quad 0<p<\alpha.\label{LA1}
\end{eqnarray}
Note that by assumption \ref{A2}, it is easy to see that
\begin{eqnarray}
|B(x,\mu)|\leq C\left[1+|x|+\mu(|\cdot|^p)^{1/p}\right],\quad \forall x\in H,\mu\in \mathscr{P}_{p}. \label{LGT}
\end{eqnarray}
Then by \eref{P2}, \eref{LA1}, \eref{LGT} and Minkowski's inequality, it follows for any $t\in [0,T]$
\begin{eqnarray*}
\left[\EE|(\Phi_{\mu}X)(t)|^p\right]^{1/p}\leq\!\!\!\!\!\!\!\!&&\left(\EE|e^{tA}\xi|^p\right)^{1/p}+\int^t_0\left[\EE |e^{(t-s)A}B(X_s,\mu_s)|^p\right]^{1/p}ds+\left[\EE\left|\int^t_0 e^{(t-s)A}dL_s\right|^p\right]^{1/p}\\
\leq\!\!\!\!\!\!\!\!&&\left(\EE|\xi|^p\right)^{1/p}+C\int^t_0 \left(\EE|X_s|^p\right)^{1/p}+\left[\mu_s(|\cdot|^p)\right]^{1/p}ds+\left[\EE\left|\int^t_0 e^{(t-s)A}dL_s\right|^p\right]^{1/p}\\
\leq\!\!\!\!\!\!\!\!&&C_{T,p},
\end{eqnarray*}
which implies that $\|\Phi_{\mu}X\|_{\Lambda_p}<\infty$. It is easy to see that $(\Phi_{\mu}X)(t)$ is $\{\mathcal{F}_t\}_{t\geq 0}$ adapted process if $\{X_t\}_{t\geq 0}$ is adapted, thus
\begin{eqnarray}
\Phi_{\mu}: \Lambda_p\rightarrow \Lambda_p.\label{F2.9}
\end{eqnarray}

On the other hand, for any $X,Y\in \Lambda_p$ with $X_0=Y_0$, we have for any $t\in [0,T]$,
\begin{eqnarray*}
e^{-\lambda t}\left[\EE|(\Phi_{\mu}X)(t)-(\Phi_{\mu}Y)(t)|^p\right]^{1/p}\leq\!\!\!\!\!\!\!\!&&e^{-\lambda t}\left[\EE\left|\int^t_0 e^{(t-s)A}\left[B(X_s,\mu_s)-B(Y_s,\mu_s)\right]ds\right|^p\right]^{1/p}\nonumber\\
\leq\!\!\!\!\!\!\!\!&&e^{-\lambda t}\int^t_0 C\left(\EE|X_s-Y_s|^p\right)^{1/p} ds\nonumber\\
\leq\!\!\!\!\!\!\!\!&&C\int^t_0e^{-\lambda (t-s)} ds \|X-Y\|_{\Lambda_p}\\
\leq\!\!\!\!\!\!\!\!&&\frac{C}{\lambda} \|X-Y\|_{\Lambda_p}.
\end{eqnarray*}
Let us choose $\lambda$ large enough such that $c_0 := \frac{C}{\lambda} <1$, thus it follows
\begin{eqnarray}
\|\Phi_{\mu}X-\Phi_{\mu}Y\|_{\Lambda_p}\leq c_0 \|X-Y\|_{\Lambda_p}. \label{F2.10}
\end{eqnarray}

Hence \eref{F2.9} and \eref{F2.10} yield that $\Phi_{\mu}$ is a contraction map on $\Lambda_p$. As a result, $\Phi_{\mu}$ has a unique fixed point $X_{\mu}$ in $\Lambda_p$, which is the unique solution of equation \eref{E2.5}, i.e.,
\begin{eqnarray}
X_{\mu}(t)=e^{tA}\xi+\int^t_0 e^{(t-s)A}B(X_\mu(s),\mu_s)ds+\int^t_0 e^{(t-s)A}d L_s.
\end{eqnarray}

\textbf{Step 2:}
We define an operator $\Psi$ on $\mu\in C([0, T], \mathscr{P}_p)$ by
$$
\Psi: \mu\rightarrow \mathscr{L}_{X_\mu}.
$$
where $\mathscr{L}_{X_\mu}=\{\mathscr{L}_{X_\mu(t)}, t\in [0,T]\}$ is the law of process $\{X_\mu(t)\}_{t\geq 0}$. Obviously, if $X$ is a mild solution of equation \eref{main equation}. Then its Law $\{\mu_t:=\mathscr{L}_{X_\mu(t)}\}_{t\geq 0}$ is a fixed point of $\Psi$. Thus in order to complete the proof, it is sufficient to show that the operator $\Psi$ has a unique fixed point.

For any $\mu\in C([0, T], \mathscr{P}_p)$, we first prove that $\Psi(\mu)\in C([0, T], \mathscr{P}_p)$. By Step 1, it is easy to see that $\sup_{t\in[0,T]}\EE|X_{\mu}(t)|^p<\infty$. Thus it remains to show that $t\rightarrow \mathscr{L}_{X_\mu(t)}$ is continuous in $(\mathscr{P}_p, \mathbb{W}_p)$.

In fact, note that for any $0\leq t\leq t+h\leq T$,
$$
X_\mu(t+h)-X_\mu(t)=(e^{hA}-I)X_\mu(t)+\int^{t+h}_{t}e^{(t+h-s)A}B(X_{\mu}(s),\mathscr{L}_{X_\mu(s)})ds+\int^{t+h}_t e^{(t+h-s)A}d L_s.
$$

By dominated convergence theorem, we have
\begin{eqnarray}
\lim_{h\rightarrow 0}\left[\EE\left|(e^{hA}-I)X_\mu(t)\right|^p\right]^{1/p}=0.\label{F2.12}
\end{eqnarray}

By \eref{P2} and \eref{LGT}, it is easy to see
\begin{eqnarray}
\lim_{h\rightarrow 0}\left[\EE\left|\int^{t+h}_{t}e^{(t+h-s)A}B(X_{s},\mathscr{L}_{X_\mu(s)})ds\right|^p\right]^{1/p}\!\!\!\!\leq\lim_{h\rightarrow 0} Ch\left(1+\sup_{t\in[0,T]}\left[\EE|X_{\mu}(t)|^p\right]^{1/p}\right)=0.\label{F2.13}
\end{eqnarray}

Define $\{\tilde L_t\}_{t\geq 0}:=\{L_{t+h}-L_{h}\}_{t\geq 0}$, which is also a cylindrical $\alpha$-stable process. Then  refer to \cite[(4.12)]{PZ}, we have
\begin{align*}
\EE\left|\int^{t+h}_t e^{(t+h-s)A}d L_s\right|^p=&\EE\left|\int^{h}_0 e^{(h-s)A}d\tilde L_s\right|^p\\
\leq&\left[\sum^{\infty}_{k=1}\frac{(1-e^{-\alpha \lambda_k h})\beta^{\alpha}_k}{\alpha\lambda_k}\right]^{p/\alpha}.
\end{align*}
Then by dominated convergence theorem and assumption \ref{A3}, we get
\begin{eqnarray}
\lim_{h\rightarrow 0}\left[\EE\left|\int^{t+h}_t e^{(t+h-s)A}d L_s\right|^p\right]^{1/p}\leq \lim_{h\rightarrow 0}C\left[\sum^{\infty}_{k=1}\frac{(1-e^{-\alpha \lambda_k h})\beta^{\alpha}_k}{\alpha\lambda_k}\right]^{1/\alpha}=0.\label{F2.14}
\end{eqnarray}
Thus by \eref{F2.13}-\eref{F2.14}, it is easy to see that
\begin{eqnarray*}
\lim_{h\rightarrow 0}\mathbb{W}_p(\mathscr{L}_{X_\mu(t+h)},\mathscr{L}_{X_\mu(t)})\leq\!\!\!\!\!\!\!\!&&\lim_{h\rightarrow 0}\left[\EE|X_\mu(t+h)-X_\mu(t)|^p\right]^{1/p}\\
\leq\!\!\!\!\!\!\!\!&&\lim_{h\rightarrow 0}\left\{\left[\EE\left|(e^{hA}-I)X_\mu(t)\right|^p\right]^{1/p}+\left[\EE\left|\int^{t+h}_t e^{(t+h-s)A}d L_s\right|^p\right]^{1/p}\right.\\
&&+\left.\left[\EE\left|\int^{t+h}_{t}e^{(t+h-s)A}B(X_{\mu}(s),\mathscr{L}_{X_\mu(s)})ds\right|^p\right]^{1/p}\right\}=0.
\end{eqnarray*}

Next, we shall prove that $\Psi$ is a contraction operator. Let $X_{\mu}$ and $X_{\nu}$  be the corresponding solutions of equation of \eref{E2.5} for $\mu, \nu\in C([0, T], \mathscr{P}_p)$ respectively. Then it follows
\begin{eqnarray*}
\left[\EE|X_{\mu}(t)-X_{\nu}(t)|^p\right]^{1/p}\leq\!\!\!\!\!\!\!\!&&\left[\EE\left|\int^t_0 e^{(t-s)A}\left(B(X_{\mu}(s),\mu_s)-B(X_{\nu}(s),\nu_s)\right)ds\right|^p\right]^{1/p}\nonumber\\
\leq\!\!\!\!\!\!\!\!&&C\int^t_0 \left[\EE|X_{\mu}(s)-X_{\nu}(s)|^p\right]^{1/p} +\mathbb{W}_p(\mu_s,\nu_s)ds,
\end{eqnarray*}
which implies
\begin{eqnarray*}
\sup_{t\in [0,T]}e^{-\lambda t}\left[\EE|X_{\mu}(t)-X_{\nu}(t)|^p\right]^{1/p}\leq\!\!\!\!\!\!\!\!&&\frac{C}{\lambda}\sup_{s\in [0,T]}e^{-\lambda s}\left[\EE|X_{\mu}(s)-X_{\nu}(s)|^p\right]^{1/p}+\frac{C}{\lambda}\sup_{s\in [0,T]}e^{-\lambda s}\mathbb{W}_p(\mu_s,\nu_s).
\end{eqnarray*}
Choose $\lambda>0$ large enough such that $\frac{C}{\lambda}<1/2$, then we get
\begin{eqnarray*}
\sup_{t\in [0,T]}e^{-\lambda t}\left[\EE|X_{\mu}(t)-X_{\nu}(t)|^p\right]^{1/p}\leq\!\!\!\!\!\!\!\!&&\frac{2C}{\lambda}\sup_{t\in [0,T]}e^{-\lambda t}\mathbb{W}_p(\mu_t,\nu_t).
\end{eqnarray*}
Note that $\mathbb{W}_p(\mathscr{L}_{X_\mu(t)},\mathscr{L}_{X_\nu(t)})\leq \left[\EE|X_{\mu}(t)-X_{\nu}(t)|^p\right]^{1/p}$, it follows
\begin{eqnarray*}
D_T(\Psi(\mu),\Psi(\nu))\leq\!\!\!\!\!\!\!\!&&\frac{2C}{\lambda}D_T(\mu,\nu).
\end{eqnarray*}
Hence $\Psi$ is a contraction operator in $C([0, T], \mathscr{P}_p)$.

\textbf{Step 3:} Let $X_t$ be the unique solution of equation (\ref{main equation}). By \cite[(4.12)]{PZ}, it follows for any $m\in [p,\alpha)$ and $t\in [0,T]$, we have
\begin{eqnarray*}
\left(\EE|X_t|^m\right)^{1/m}\leq\!\!\!\!\!\!\!\!&&\left(\EE|e^{tA}\xi|^m\right)^{1/m}+\int^t_0\left[\EE |e^{(t-s)A}B(X_s,\mathscr{L}_{X_s})|^m\right]^{1/m}ds+\left[\EE\left|\int^t_0 e^{(t-s)A}dL_s\right|^m\right]^{1/m}\\
\leq\!\!\!\!\!\!\!\!&&\left(\EE|\xi|^m\right)^{1/m}+\int^t_0 C\left(\EE|X_s|^m\right)^{1/m}+C\left(\EE|X_s|^p\right)^{1/p}ds+\left[\EE\left|\int^t_0 e^{(t-s)A}dL_s\right|^m\right]^{1/m}\\
\leq\!\!\!\!\!\!\!\!&&C\left[1+\left(\EE|\xi|^m\right)^{1/m}\right]+\int^t_0 C\left(\EE|X_s|^m\right)^{1/m}ds.
\end{eqnarray*}
Then by Gronwall's inequality, it is easy to see
\begin{eqnarray*}
\left(\EE|X_t|^m\right)^{1/m}\leq Ce^{Ct}\left[1+\left(\EE|\xi|^m\right)^{1/m}\right].
\end{eqnarray*}
The proof is complete.
\end{proof}

\section{Averaging principle for multiscale Mckean-Vlasov SPDEs driven by $\alpha$-stable processes}

In this section, we further study a class of multiscale Mckean-Vlasov SPDEs driven by $\alpha$-stable processes. Under some proper assumptions, we will prove the strong averaging principle holds with some order $r>0$. The main technique is based on the classical  Khasminskii's time discretization method.

\subsection{Assumptions and main result}

We recall the multiscale Mckean-Vlasov SPDEs driven by $\alpha$-stable processes in the Hilbert space $H$:
\begin{equation}\left\{\begin{array}{l}\label{main equation 1}
\displaystyle
dX^{\vare}_t=\left[AX^{\vare}_t+F(X^{\vare}_t, \mathscr{L}_{X^{\vare}_t}, Y^{\vare}_t)\right]dt+dL_t,\quad X^{\vare}_0=\xi\in H,\\
dY^{\vare}_t=\frac{1}{\vare}[AY^{\vare}_t+G(X^{\vare}_t, \mathscr{L}_{X^{\vare}_t},Y^{\vare}_t)]dt+\frac{1}{\vare^{1/\alpha}}dZ_t,\quad Y^{\vare}_0=\eta\in H,\end{array}\right.
\end{equation}
where operator $A$ satisfies assumption \ref{A1}. $\xi,\eta$ are two  $H$-valued random variable. $\{L_t\}_{t\geq 0}$ and $\{Z_t\}_{t\geq 0}$ are two mutually independent cylindrical $\alpha$-stable processes with $\alpha\in(1,2)$ on a probability space $(\Omega,\mathcal{F},\mathbb{P})$ with natural filtration $\{\mathcal{F}_t:=\sigma(\xi,\eta,L_s,Z_s,s\leq t)\}_{t\geq 0}$ , i.e.,
$$
L_t=\sum^{\infty}_{k=1}\beta_{k}L^{k}_{t}e_k,\quad Z_t=\sum^{\infty}_{k=1}\gamma_{k}Z^{k}_{t}e_k,\quad t\geq 0,
$$
where $\{\beta_k\}_{k\in \mathbb{N}_{+}}$ and $\{\gamma_k\}_{k\in\mathbb{N}_{+}}$ are two given sequence of positive numbers, $\{L^k_t\}_{k\in \mathbb{N}_{+}}$ and $\{Z^k_t\}_{k\in \mathbb{N}_{+}}$ are two sequences of independent one dimensional symmetric $\alpha$-stable processes satisfying for any $k\in \mathbb{N}_{+}$ and $t\geq0$,
$$\mathbb{E}[e^{i L^k_{t}h}]=\mathbb{E}[e^{i Z^k_{t}h}]=e^{-t|h|^{\alpha}}, \quad h\in \mathbb{R}.$$

Now, we suppose the following conditions hold throughout this section:
\begin{conditionB}\label{B1}
Suppose that there exists a constant $p\in [1,\alpha)$ such that
$$
F(\cdot,\cdot,\cdot):H\times \mathscr{P}_p\times H\rightarrow H,
$$
$$
G(\cdot,\cdot,\cdot):H\times \mathscr{P}_p\times H\rightarrow H.
$$
Furthermore, there exist constants $C, L_{G}>0$ such that for any $x_i,y_i\in H$ and $\mu_i\in \mathscr{P}_p$, $i=1,2$,
\begin{eqnarray*}
&&\left|F(x_1,\mu_1,y_1)-F(x_2,\mu_2,y_2)\right|\leq C\left[|x_1-x_2|+\mathbb{W}_p(\mu_1,\mu_2)+|y_1-y_2|\right],\label{LipF}\\
&&\left|G(x_1,\mu_1,y_2)-G(x_2,\mu_2,y_2)\right|\leq C\left[|x_1-x_2|+\mathbb{W}_p(\mu_1,\mu_2)\right]+L_{G}|y_1-y_2|.\label{LipG}
\end{eqnarray*}
\end{conditionB}

\begin{conditionB}\label{B2} There exists $\theta\in (0,2/\alpha]$ such that
$\sum^{\infty}_{k=1}\frac{\beta^{\alpha}_k}{\lambda^{1-\alpha\theta/2}_k}<\infty$ and $\sum^{\infty}_{k=1}\frac{\gamma^{\alpha}_k}{\lambda_k}<\infty$.
\end{conditionB}

\begin{conditionB}\label{B3}
Suppose that $\lambda_{1}-L_{G}>0$ and there exists $C>0$ such that
\begin{eqnarray}
\sup_{x,y\in H}|F(x,\mu,y)|\leq C\left[1+\left(\mu(|\cdot|^p)\right)^{1/p}\right].\label{BDC}
\end{eqnarray}
\end{conditionB}

\vspace{0.2cm}

The following are some comments on the assumptions above:
\begin{remark}\label{Re1} Suppose that assumptions \ref{A1}, \ref{B1} and \ref{B2} holds. By Theorem \ref{main result 1}, for any given $\varepsilon>0$ and initial value $(\xi, \eta)\in H\times H$ satisfying $\EE|\xi|^{p}<\infty$ and $\EE|\eta|^{p}<\infty$, equation \eref{main equation 1} admits a unique mild solution $(X^{\varepsilon}_t, Y^{\varepsilon}_t)$, i.e., $\PP$-a.s.,
\begin{equation}\left\{\begin{array}{l}\label{A mild solution}
\displaystyle
X^{\varepsilon}_t=e^{tA}\xi+\int^t_0e^{(t-s)A}F(X^{\varepsilon}_s, \mathscr{L}_{X^{\vare}_s},Y^{\varepsilon}_s)ds+\int^t_0 e^{(t-s)A}dL_s,\\
Y^{\varepsilon}_t=e^{tA/\varepsilon}\eta+\frac{1}{\varepsilon}\int^t_0e^{(t-s)A/\varepsilon}G(X^{\varepsilon}_s, \mathscr{L}_{X^{\vare}_s},Y^{\varepsilon}_s)ds
+\frac{1}{\vare^{1/\alpha}}\int^t_0 e^{(t-s)A/\varepsilon}dZ_s.
\end{array}\right.
\end{equation}
\end{remark}
\begin{remark}
The condition $\lambda_{1}-L_{G}>0$ in assumption \ref{B3} is called the strong dissipative condition, which is used to prove the existence and uniqueness of the invariant measures and the exponential ergodicity of the transition semigroup of the frozen equation.
\end{remark}
\begin{remark}
%For the reason that the solution $(X^{\varepsilon}_t, Y^{\varepsilon}_t)$ does not has finite second moment, thus we assume the condition \eref{BDC}.
The method used here is the classical Khasminskii's time discretization, which highly depends on the square calculation in the proof, hence the finite second moment of the solution $X^{\vare}_t$ is required usually. But the solution $X^{\vare}_t$ for system (1.1) only has finite $p$-th moment $(0< p < \alpha)$, thus the  condition \eref{BDC} in assumption \ref{B3} is used to weaken the required finite second moment to finite first moment. However by the technique of Poisson equation (see e.g. \cite{RSX,SXX}), the condition \eref{BDC} could be removed, which is left to our further work.
\end{remark}
\begin{remark}\label{Re2}
Refer to \cite[Lemma 4.1]{PSXZ}, if $\sum^{\infty}_{k=1}\frac{\beta^{\alpha}_k}{\lambda^{1-\alpha\theta/2}_k}<\infty$ holds for some $\theta\geq 0$ then for any  $0<m<\alpha$, we have
\begin{eqnarray}
\sup_{t\geq 0}\EE\left\|\int^t_0 e^{(t-s)A}d L_s\right\|^m_{\theta}\leq C_{\alpha,p}\left(\sum^{\infty}_{k=1}\frac{\beta^{\alpha}_k}{\lambda^{1-\alpha\theta/2}_k}\right)^{m/\alpha}.\label{LA}
\end{eqnarray}
%Similar, if $\sum_{k\in \mathbb{N}_{+}}\frac{\gamma^{\alpha}_k}{\lambda^{1-\alpha\theta/2}_k}<\infty$ holds for some $\theta\geq 0$ then for any  $0<p<\alpha$, we have
%\begin{eqnarray}
%\sup_{t\geq 0}\EE\left\|\int^t_0 e^{(t-s)A}d Z_s\right\|^p_{\theta}\leq C_{\alpha,p}\left(\sum_{k\in\mathbb{N}_{+}}\frac{\gamma^{\alpha}_k}{\lambda^{1-\alpha\theta/2}_k}\right)^{p/\alpha}.\label{ZA}
%\end{eqnarray}
\end{remark}

Now, we state our second result in this paper, whose proof is left in subsection 3.3 below.
\begin{theorem} \label{main result 2}
Suppose that assumptions \ref{A1} and \ref{B1}-\ref{B3} hold. Then for any $T>0$, $\xi,\eta\in H$ satisfying $\EE|\xi|^m<\infty$ for some $m\in [p,\alpha)$ and $\EE|\eta|^p<\infty$, then we have
\begin{align}
\left[\mathbb{E}\left(\sup_{t\in [0,T]}|X_{t}^{\vare}-\bar{X}_{t}|^{m}\right)\right]^{1/m}\leq C_{T}\left[1+\left(\EE|\xi|^m\right)^{1/m}+\left(\EE|\eta|^{p}\right)^{1/p}\right]\vare^{\frac{\theta}{2(1+\theta)}}. \label{ST}
\end{align}
\end{theorem}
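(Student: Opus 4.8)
The plan is to follow the Khasminskii time-discretization scheme, exploiting two structural features of the system. First, since both $X^{\vare}$ and $\bar X$ are driven by the \emph{same} noise $L$, the stochastic convolutions cancel in the difference $X^{\vare}_t-\bar X_t$, so this difference is a pathwise Lebesgue integral and $\EE\sup_{t}|X^{\vare}_t-\bar X_t|^m$ carries no martingale term. Second, the boundedness \eref{BDC} of $F$ in assumption \ref{B3} lets us estimate fluctuations of $F$ in $L^2$ even though the solutions themselves only possess moments of order below $\alpha$. Before the main estimate I would record the preliminary facts. For fixed $(x,\mu)\in H\times\mathscr{P}_p$ the frozen equation $dY_t=[AY_t+G(x,\mu,Y_t)]dt+dZ_t$ is a genuine (non-distribution-dependent) Markov process, so the dissipativity $\lambda_1-L_G>0$ from assumption \ref{B3} yields a unique invariant measure $\nu^{x,\mu}$ together with exponential ergodicity; this defines the averaged drift $\bar F(x,\mu):=\int_H F(x,\mu,y)\,\nu^{x,\mu}(dy)$, which inherits the Lipschitz property in $(x,\mu)$ and the bound \eref{BDC}, so the averaged equation $d\bar X_t=[A\bar X_t+\bar F(\bar X_t,\mathscr{L}_{\bar X_t})]dt+dL_t$ is well posed by Theorem \ref{main result 1}. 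I would also establish, uniformly in $\vare$, the moment bounds $\sup_{\vare}\sup_{t\leq T}\EE|X^{\vare}_t|^m<\infty$ and $\sup_{\vare}\sup_{t\leq T}\EE|Y^{\vare}_t|^p<\infty$ (the latter using dissipativity and \eref{P2}), and the regularity estimate $[\EE|X^{\vare}_s-X^{\vare}_r|^p]^{1/p}\leq C|s-r|^{\theta/2}$: its three contributions are controlled by \eref{P4} and the uniform $H^{\theta}$-bound of the stochastic convolution from Remark \ref{Re2}, by the boundedness \eref{BDC}, and by the increment estimate $\EE|\int_r^s e^{(s-u)A}dL_u|^p\leq C|s-r|^{p\theta/2}$, which follows from assumption \ref{B2} after using $1-e^{-x}\leq x^{\alpha\theta/2}$.

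With these in hand, the difference satisfies $X^{\vare}_t-\bar X_t=\int_0^t e^{(t-s)A}[F(X^{\vare}_s,\mathscr{L}_{X^{\vare}_s},Y^{\vare}_s)-\bar F(\bar X_s,\mathscr{L}_{\bar X_s})]ds$. I would split the integrand as $[\bar F(X^{\vare}_s,\mathscr{L}_{X^{\vare}_s})-\bar F(\bar X_s,\mathscr{L}_{\bar X_s})]+[F(X^{\vare}_s,\mathscr{L}_{X^{\vare}_s},Y^{\vare}_s)-\bar F(X^{\vare}_s,\mathscr{L}_{X^{\vare}_s})]$. The first bracket is Lipschitz and, together with $\mathbb{W}_p(\mathscr{L}_{X^{\vare}_s},\mathscr{L}_{\bar X_s})\leq[\EE|X^{\vare}_s-\bar X_s|^p]^{1/p}\leq[\EE\sup_{u\leq s}|X^{\vare}_u-\bar X_u|^m]^{1/m}$ (Jensen, $m\geq p$), feeds a Gronwall loop; writing $J(t)$ for the contribution of the second bracket, Gronwall reduces the whole theorem to the bound $[\EE\sup_{t\leq T}|J(t)|^m]^{1/m}\leq C_T(\delta^{\theta/2}+(\vare/\delta)^{1/2})$ for a free discretization step $\delta>0$.

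To estimate $J$ I would introduce the auxiliary fast process $\hat Y^{\vare}$ solving the fast equation with the slow arguments frozen to their values $(X^{\vare}_{s(t)},\mathscr{L}_{X^{\vare}_{s(t)}})$ at the left endpoint $s(t)=\lfloor t/\delta\rfloor\delta$ of each mesh interval. Then $J$ decomposes into a freezing part $\int_0^t e^{(t-s)A}[F(X^{\vare}_s,\mathscr{L}_{X^{\vare}_s},Y^{\vare}_s)-F(X^{\vare}_{s(s)},\mathscr{L}_{X^{\vare}_{s(s)}},\hat Y^{\vare}_s)]ds$ and an ergodic part with integrand $F(X^{\vare}_{s(s)},\mathscr{L}_{X^{\vare}_{s(s)}},\hat Y^{\vare}_s)-\bar F(X^{\vare}_{s(s)},\mathscr{L}_{X^{\vare}_{s(s)}})$. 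For the freezing part, assumption \ref{B1}, the regularity estimate above, and the comparison $[\EE|Y^{\vare}_s-\hat Y^{\vare}_s|^p]^{1/p}\leq C\delta^{\theta/2}$ (obtained by a Gronwall argument on the fast scale in which the prefactor $1/\vare$ is absorbed by the dissipative decay $e^{-(\lambda_1-L_G)(s-s(s))/\vare}$) give a contribution of order $\delta^{\theta/2}$. For the ergodic part I would work interval by interval: conditioning on $\mathcal{F}_{t_k}$, rescaling time by $s=t_k+\vare r$ so that $\hat Y^{\vare}$ runs the frozen process for $r\in[0,\delta/\vare]$, and using \eref{BDC} together with the $L^2$ ergodic estimate $\EE|\int_0^{R}[F(x,\mu,Y^{x,\mu,y}_r)-\bar F(x,\mu)]dr|^2\leq CR$, I obtain $\EE|\int_{t_k}^{t_{k+1}}[\cdots]ds|^2\leq C\vare\delta$; summing the $T/\delta$ intervals and using $m<2$ (so $L^m\leq L^2$) produces the order $(\vare/\delta)^{1/2}$.

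Combining the two bounds and optimizing $\delta^{\theta/2}=(\vare/\delta)^{1/2}$, i.e. $\delta=\vare^{1/(1+\theta)}$, yields the exponent $\theta/(2(1+\theta))$ claimed in \eref{ST}. The main obstacle is the ergodic part of $J$: under the $\alpha$-stable moment restriction the usual $L^2$/It\^o machinery is unavailable, and the estimate hinges on combining the boundedness \eref{BDC} (to regain $L^2$ control of the bounded observable $F-\bar F$), the exponential ergodicity of the frozen Markov semigroup (which is legitimate precisely because $G$ does not depend on the law of the fast variable), and the time-rescaling that converts the relaxation time $\vare$ of the fast dynamics into the $(\vare/\delta)^{1/2}$ gain; controlling $\sup_t$ rather than a fixed $t$ in this step, given only a bounded integrand, is the most delicate point.
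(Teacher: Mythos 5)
Your proposal follows essentially the same route as the paper: the same Khasminskii discretization with the auxiliary process $\hat Y^{\vare}$ frozen at the left mesh points, the same Gronwall reduction to the ergodic fluctuation term, the same passage to $L^2$ via \eref{BDC} and $m<2$ followed by conditioning, time-rescaling onto the frozen process and the exponential mixing of Proposition \ref{ergodicity}, and the identical optimization $\delta=\vare^{1/(1+\theta)}$. The one caveat is that your pointwise regularity claim $[\EE|X^{\vare}_s-X^{\vare}_r|^p]^{1/p}\leq C|s-r|^{\theta/2}$ cannot hold uniformly down to $r=0$ when $\xi$ is only in $H$ (since $\|X^{\vare}_r\|_{\theta}$ behaves like $r^{-\theta/2}$); the paper states and uses this estimate in time-integrated form, which is all your argument actually requires.
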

\begin{remark}
In contrast to the model in \cite{BYY}, the coefficients $F$ and $G$ in equation \eref{main equation 1} all depend on the distribution. Meanwhile, the main result \eref{ST} implies that the strong convergence order is $\frac{\theta}{2(1+\theta)}$, which depends on the regular assumption on the noise in the slow component.
\end{remark}

\subsection {Some a priori estimates}
In this subsection, we will prove some a priori estimates of the solutions $(X_{t}^{\varepsilon}, Y_{t}^{\vare})$ and an auxiliary process $\hat Y_{t}^{\vare}$. Note that we always assume that the initial value $\xi,\eta$ satisfying $\EE|\xi|^m<\infty$ for some $m\in [p,\alpha)$ and $\EE|\eta|^p<\infty$.

\begin{lemma} \label{PMY}
For any $T>0$, there exists a constant $C_{T}>0$ such that
\begin{align}
\sup_{\vare\in(0,1),t\in [0,T] }\left(\mathbb{E}|X_{t}^{\vare}|^{m}\right)^{1/m}\leq C_{T}\left[1+\left(\EE|\xi|^{m}\right)^{1/m}\right]\label{EX}
\end{align}
and
\begin{align}
\sup_{\vare\in(0,1),t\in [0,T]}\left(\mathbb{E}|Y_{t}^{\varepsilon}|^{p}\right)^{1/p}\leq C_{T}\left[1+\left(\EE|\xi|^p\right)^{1/p}+\left(\EE|\eta|^{p}\right)^{1/p}\right].\label{EY}
\end{align}
\end{lemma}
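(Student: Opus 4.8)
The plan is to work from the mild formulation \eref{A mild solution}, bounding each of its three terms by Minkowski's inequality together with the semigroup contraction \eref{P2} and the stochastic-convolution moment bounds \eref{LA1} and \eref{LA}, and then to close each estimate by a Gronwall argument. The structural point that makes everything uniform in $\vare$ is twofold: assumption \ref{B3} forces $F$ to be bounded independently of $y$, which decouples the slow equation from $Y^{\vare}$, and the strong dissipativity $\lambda_1>L_G$ absorbs the singular $1/\vare$ weight appearing in the fast equation.

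For \eref{EX} I mirror Step 3 of the proof of Theorem \ref{main result 1}. Applying Minkowski to the first line of \eref{A mild solution}: the initial term obeys $(\EE|e^{tA}\xi|^m)^{1/m}\leq(\EE|\xi|^m)^{1/m}$ by \eref{P2}; the drift is handled by \eref{BDC}, which gives the deterministic bound $|F(X^{\vare}_s,\mathscr{L}_{X^{\vare}_s},Y^{\vare}_s)|\leq C[1+(\EE|X^{\vare}_s|^p)^{1/p}]$ with no dependence on $Y^{\vare}$, so that after \eref{P2} and the elementary inequality $(\EE|X^{\vare}_s|^p)^{1/p}\leq(\EE|X^{\vare}_s|^m)^{1/m}$ (valid since $p\leq m$) the drift contributes $C\int_0^t[1+(\EE|X^{\vare}_s|^m)^{1/m}]\,ds$; and the stochastic convolution is bounded uniformly in $t$ and $\vare$ by \eref{LA} with $\theta=0$, noting that $\sum_{k=1}^\infty\beta_k^\alpha/\lambda_k<\infty$ follows from \ref{B2} and that the $L$-noise carries no factor of $\vare$. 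Gronwall's inequality yields \eref{EX}. Running the identical computation at exponent $p$ also gives $(\EE|X^{\vare}_t|^p)^{1/p}\leq C_T[1+(\EE|\xi|^p)^{1/p}]$, which I feed into the fast estimate.

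For \eref{EY} I apply Minkowski to the second line of \eref{A mild solution}. The initial term is $\leq e^{-\lambda_1 t/\vare}(\EE|\eta|^p)^{1/p}\leq(\EE|\eta|^p)^{1/p}$ by \eref{P2}. The Lipschitz property in \ref{B1} yields $|G(x,\mu,y)|\leq C[1+|x|+(\mu(|\cdot|^p))^{1/p}]+L_G|y|$, so after \eref{P2} the drift is bounded by $\frac{1}{\vare}\int_0^t e^{-\lambda_1(t-s)/\vare}\big\{C[1+(\EE|X^{\vare}_s|^p)^{1/p}]+L_G(\EE|Y^{\vare}_s|^p)^{1/p}\big\}\,ds$; since $\frac{1}{\vare}\int_0^t e^{-\lambda_1(t-s)/\vare}\,ds\leq\lambda_1^{-1}$, the $X$-part is uniformly controlled by the $p$-bound just established. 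For the fast stochastic convolution I exploit the scaling of $\alpha$-stable integrals: working coordinatewise through \cite{PZ}, the stable scale of $\frac{\gamma_k}{\vare^{1/\alpha}}\int_0^t e^{-\lambda_k(t-s)/\vare}\,dZ^k_s$ is $\gamma_k\big(\frac{1-e^{-\alpha\lambda_k t/\vare}}{\alpha\lambda_k}\big)^{1/\alpha}$, whence $\EE\big|\frac{1}{\vare^{1/\alpha}}\int_0^t e^{(t-s)A/\vare}\,dZ_s\big|^p\leq C\big(\sum_{k=1}^\infty\gamma_k^\alpha/\lambda_k\big)^{p/\alpha}$, finite and uniform in $t,\vare$ by \ref{B2}. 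Collecting the terms, with $u(t):=(\EE|Y^{\vare}_t|^p)^{1/p}$ and $K:=C_T[1+(\EE|\xi|^p)^{1/p}+(\EE|\eta|^p)^{1/p}]$, I obtain
$$u(t)\leq K+\frac{L_G}{\vare}\int_0^t e^{-\lambda_1(t-s)/\vare}u(s)\,ds.$$
Setting $w(t):=e^{\lambda_1 t/\vare}u(t)$ turns this into $w(t)\leq Ke^{\lambda_1 t/\vare}+\frac{L_G}{\vare}\int_0^t w(s)\,ds$; Gronwall's inequality followed by multiplication by $e^{-\lambda_1 t/\vare}$ gives $u(t)\leq\frac{\lambda_1}{\lambda_1-L_G}K$, which is exactly \eref{EY}.

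The main obstacle is the fast equation, where both the $1/\vare$ in the drift and the $1/\vare^{1/\alpha}$ in front of $Z$ threaten uniformity in $\vare$. The noise is tamed by the exact stable-scaling computation above, which shows the fast stochastic convolution is bounded independently of $\vare$; the singular drift is tamed by the weighted Gronwall argument, which closes precisely because $\lambda_1-L_G>0$ (assumption \ref{B3}). Without this strong dissipativity the factor $L_G/\lambda_1$ produced by integrating the kernel $\frac{1}{\vare}e^{-\lambda_1(t-s)/\vare}$ would fail to be $<1$, and the resulting constant $\frac{\lambda_1}{\lambda_1-L_G}$ would blow up, destroying the uniformity as $\vare\to0$.
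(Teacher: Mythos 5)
Your proof is correct and follows essentially the same route as the paper: Minkowski's inequality combined with \eref{P2}, \eref{BDC} and the stochastic-convolution moment bounds (the paper obtains the fast convolution bound by the rescaling $\tilde Z_t:=\vare^{-1/\alpha}Z_{t\vare}$, which is the same computation as your coordinatewise stable-scale argument), followed by Gronwall for the slow component and the kernel estimate $\frac{1}{\vare}\int_0^t e^{-\lambda_1(t-s)/\vare}ds\leq \lambda_1^{-1}$ together with $\lambda_1>L_G$ for the fast one. The only (harmless) deviation is at the very end of the $Y^{\vare}$ estimate, where the paper absorbs the term $\frac{L_G}{\lambda_1}\sup_{t\in[0,T]}(\EE|Y_t^{\vare}|^p)^{1/p}$ into the left-hand side, whereas you run a weighted Gronwall argument with $w(t)=e^{\lambda_1 t/\vare}u(t)$; both close for exactly the same reason and yield the same constant $\lambda_1/(\lambda_1-L_G)$.
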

\begin{proof}
%Let $\mathcal{H}:=H\times H$ be the product Hilbert space. Rewrite the system \eref{Main equation} for $Z^{\varepsilon}_t=(X^{\varepsilon}_t,Y^{\varepsilon}_t)$ as
%\begin{eqnarray*}
%dZ^{\varepsilon}_t=\tilde{A}Z^{\varepsilon}_tdt+G^{\vare}(Z^{\varepsilon}_t)dt+dW_t,\quad Z^{\varepsilon}_0=(x,y)\in \mathcal{H},
%\end{eqnarray*}
%where $W_t:=(W_t^{1},W_t^{2})$  is a $\mathcal{H}$-valued cylindrical-Wiener process, $Q$ is a bounded operator in  $\mathcal{H}$, which is denoted by $Qz=(Q_1x, Q_2y)$, for $z=(x,y)\in \mathcal{H}$, and
%\begin{eqnarray*}
%&&\tilde{A}Z^{\varepsilon}_t=\left(AX^{\varepsilon}_t,\frac{1}{\varepsilon}AY^{\varepsilon}_t\right),
%\\&&G^{\vare}(Z^{\varepsilon}_t)=\left(B(X^{\varepsilon}_t,Y^{\varepsilon}_t),\frac{1}{\varepsilon}F(X^{\varepsilon}_t,Y^{\varepsilon}_t)\right).
%\end{eqnarray*}
%It is easy to see that $G^{\vare}$ is Lipschitz continuous  in $\mathcal{H}$, i.e.,
%$$
%\|G^{\vare}(z_1)-G^{\vare}(z_2)\|_{\mathcal{H}}\leq C_{\vare}\|z_1-z_2\|^{\alpha\wedge\beta\wedge\gamma}_{\mathcal{H}},\quad z_1,z_2\in \mathcal{H}.
%$$
%Then under the assumptions \ref{A1}-\ref{A5}, the existence and uniqueness of strong solution in the mild sense for system \eqref{Main equation} follows by \cite[Theorem 7]{DF}.
%Note that we consider the time-independent SDE, so the condition (6) in \cite{DF} is replaced by condition \eref{A50} here, in fact these two conditions are the same essentially.
Recall that
$$
X^{\varepsilon}_t=e^{tA}\xi+\int^t_0 e^{(t-s)A}F(X^{\varepsilon}_s, \mathscr{L}_{X^{\vare}_s},Y^{\varepsilon}_s)ds+\int^t_0 e^{(t-s)A}dL_s.
$$
Then by \eref{BDC}, \eref{LA} and Minkowski's inequality, we have
\begin{eqnarray*}
\sup_{t\in[0,T]}\left(\EE|X_{t}^{\varepsilon}|^{m}\right)^{1/m}\leq\!\!\!\!\!\!\!\!&&\left(\EE|\xi|^m\right)^{1/m}+C\int^T_0 \left(\EE|X_{t}^{\varepsilon}|^{p}\right)^{1/p}dt+\sup_{t\in[0, T]}\left[\EE\left|\int^t_0 e^{(t-s)A}dL_s\right|^{m}\right]^{1/m}\nonumber\\
\leq\!\!\!\!\!\!\!\!&&C\left[1+\left(\EE|\xi|^m\right)^{1/m}\right]+C\int^T_0 \left(\EE|X_{t}^{\varepsilon}|^{m}\right)^{1/m}dt+C\left(\sum^{\infty}_{k=1}\frac{\beta^{\alpha}_k}{\lambda^{1-\alpha\theta/2}_k}\right)^{m/\alpha}.
\end{eqnarray*}
Then the Gronwall's inequality implies that \eref{EX} holds.

Now, we proceed to show estimate (\ref{EY}). Recall that
$$
Y^{\varepsilon}_t=e^{tA/\varepsilon}\eta+\frac{1}{\varepsilon}\int^t_0e^{(t-s)A/\varepsilon}G(X^{\varepsilon}_s, \mathscr{L}_{X^{\vare}_s},Y^{\varepsilon}_s)ds
+\frac{1}{\vare^{1/\alpha}}\int^t_0 e^{(t-s)A/\varepsilon}dZ_s.
$$
Note that by assumption \ref{B1}, it is easy to see that
\begin{eqnarray}
|G(x,\mu,y)|\leq C\left[1+|x|+\mu(|\cdot|^p)^{1/p}\right]+L_G|y|,\quad \forall x,y\in H,\mu\in \mathscr{P}_{p}. \label{LFT}
\end{eqnarray}
Then by \eref{P2} and \eref{LFT}, we have for any $t\geq0$,
\begin{align*}
|Y^{\varepsilon}_t|\leq\!\!|\eta|+\frac{1}{\varepsilon}\int^t_0\!\!e^{-\lambda_1(t-s)/\varepsilon}(C+C|X^{\varepsilon}_s|+C\left[\EE|X^{\varepsilon}_s|^p\right]^{1/p}+L_{G}|Y^{\varepsilon}_s|)ds
+\left|\frac{1}{\vare^{1/\alpha}}\int^t_0 e^{(t-s)A/\varepsilon}dZ_s\right|.
\end{align*}

Define $\tilde Z_t:=\frac{1}{\vare^{1/ \alpha}}Z_{t\vare}$, which is also a cylindrical $\alpha$-stable process. Then by \cite[(4.12)]{PZ},
\begin{align*}
\EE\left|\frac{1}{\vare^{1/\alpha}}\int^t_0 e^{(t-s)A/\varepsilon}dZ_s\right|^p=&\EE\left|\int^{t/\vare}_0 e^{(t/\vare-s)A}d\tilde Z_s\right|^p\\
\leq&C\left(\sum^{\infty}_{k=1}\gamma^{\alpha}_k \frac{1-e^{-\alpha \lambda_k t/\vare}}{\alpha \lambda_k} \right)^{p/\alpha}\\
\leq&C\left(\sum^{\infty}_{k=1}\frac{\gamma^{\alpha}_k }{\alpha \lambda_k} \right)^{p/\alpha},
\end{align*}
which together with \eref{EX}, by Minkowski's inequality, we have for any $t\leq T$,
\begin{align*}
\left(\EE|Y^{\varepsilon}_t|^p\right)^{1/p}\leq&\left(\EE|\eta|^p\right)^{1/p}+\frac{C}{\varepsilon}\int^t_0e^{-\lambda_1(t-s)/\varepsilon}ds+\frac{C}{\varepsilon}\int^t_0e^{-\lambda_1(t-s)/\varepsilon}\left(\EE|X^{\varepsilon}_s|^p\right)^{1/p}ds\\
&+\frac{L_G}{\varepsilon}\int^t_0e^{-\lambda_1(t-s)/\varepsilon}\left(\EE|Y^{\varepsilon}_s|^p\right)^{1/p}ds
+\left[\EE\left|\frac{1}{\vare^{1/\alpha}}\int^t_0 e^{(t-s)A/\varepsilon}dZ_s\right|^p\right]^{1/p}\\
\leq& C_T\left[1+\left(\EE|\xi|^p\right)^{1/p}+\left(\EE|\eta|^p\right)^{1/p}\right]+\frac{L_{G}}{\lambda_1}\sup_{0\leq t\leq T}\left(\EE|Y^{\varepsilon}_t|^p\right)^{1/p}.
\end{align*}
Hence by the condition $L_G<\lambda_1$ in assumption \ref{B3}, it is easy to see that \eref{EY} holds. The proof is complete.
\end{proof}

\begin{remark}
Note that if without the condition \eref{BDC}. Then by \eref{LGT}, we can obtain following a prior estimate:
$$
\sup_{\vare\in(0,1),t\in [0,T] }\left[\left(\mathbb{E}|X_{t}^{\vare}|^{m}\right)^{1/m}+\left(\mathbb{E}|Y_{t}^{\varepsilon}|^{m}\right)^{1/m}\right]\leq C_{T}\left[1+\left(\EE|\xi|^{m}\right)^{1/m}+\left(\EE|\eta|^{m}\right)^{1/m}\right].
$$
\end{remark}

%Usually, the H\"{o}lder continuity of $X_{t}^{\varepsilon}$ in time plays an important role in the method of time discretization (see \cite[Proposition 4.4]{C1}, \cite[Lemma 3,4]{DSXZ} and \cite[Proposition 9]{GP}), then the initial value $x\in H^{\theta}$ will be assumed for some $\theta>0$. However inspired from \cite{LRSX2}, studying the H\"{o}lder continuity can be replaced by studying the integral of the time increment of $X_{t}^{\varepsilon}$, which is weaker than the H\"{o}lder continuity but  enough for our purpose, and it only needs initial value $x\in H$ for advantage.

\begin{lemma} \label{L3.9}
For any $x,y\in H$ and $T>0$, there exists a constant $C_{T}>0$ such that for any $\varepsilon\in(0,1)$, we have
\begin{align}
\int^{T}_0\left[\mathbb{E}|X^{\varepsilon}_t-X^{\varepsilon}_{t(\delta)}|^m\right]^{1/m}dt\leq C_T\delta^{\frac{\theta }{2}}\left[1+\left(\EE|\xi|^{m}\right)^{1/m}\right],\label{COX}
\end{align}
where $\theta$ is the one in assumption \ref{B2} and $t(\delta):=[\frac{t}{\delta}]\delta$ with $[\frac{t}{\delta}]$ is the integer part of $\frac{t}{\delta}$.
\end{lemma}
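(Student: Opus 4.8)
The plan is to work from the mild formulation and split the increment on each dyadic-type block. For $t(\delta)\leq t$ I would write
\begin{equation*}
X^{\vare}_t-X^{\vare}_{t(\delta)}=\big(e^{(t-t(\delta))A}-I\big)X^{\vare}_{t(\delta)}+\int^t_{t(\delta)} e^{(t-s)A}F(X^{\vare}_s,\mathscr{L}_{X^{\vare}_s},Y^{\vare}_s)\,ds+\int^t_{t(\delta)} e^{(t-s)A}dL_s=:J_1(t)+J_2(t)+J_3(t),
\end{equation*}
bound the $L^m$-norm of each of the three pieces, and integrate in $t$ over $[0,T]$. The factor $\delta^{\theta/2}$ should come out of each term, using repeatedly that $\theta/2\leq 1/\alpha<1$ and $\delta\leq 1$, so that $\delta\leq\delta^{\theta/2}$.

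For $J_2$ I would use \eref{P2} together with the uniform bound \eref{BDC} on $F$; this is exactly where assumption \ref{B3} is convenient, since it makes $|F|$ independent of the pointwise values of $X^{\vare}_s$ and $Y^{\vare}_s$. Combined with the moment estimate \eref{EX} of Lemma \ref{PMY}, this gives $\big[\EE|J_2(t)|^m\big]^{1/m}\leq C(t-t(\delta))\big(1+\sup_s(\EE|X^{\vare}_s|^p)^{1/p}\big)\leq C_T\,\delta\,(1+(\EE|\xi|^m)^{1/m})\leq C_T\,\delta^{\theta/2}(1+(\EE|\xi|^m)^{1/m})$. For $J_3$ I would mimic the computation leading to \eref{F2.14}: by \cite[(4.12)]{PZ} one has $\EE|J_3(t)|^m\leq\big[\sum_k\frac{(1-e^{-\alpha\lambda_k(t-t(\delta))})\beta^{\alpha}_k}{\alpha\lambda_k}\big]^{m/\alpha}$, and the elementary inequality $1-e^{-x}\leq x^{\alpha\theta/2}$ (valid for $x\geq0$ precisely because $\alpha\theta/2\leq1$, i.e. the restriction $\theta\leq 2/\alpha$ in assumption \ref{B2}) turns the sum into $C(t-t(\delta))^{\alpha\theta/2}\sum_k\frac{\beta^{\alpha}_k}{\lambda^{1-\alpha\theta/2}_k}$, which is finite by the summability in \ref{B2}. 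Taking the $m$-th root yields $\big[\EE|J_3(t)|^m\big]^{1/m}\leq C\,\delta^{\theta/2}$.

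The main obstacle is $J_1$, which requires $H^{\theta}$-regularity of $X^{\vare}_{t(\delta)}$. I would substitute the mild formula once more into $X^{\vare}_{t(\delta)}$: the stochastic convolution part is controlled in $\|\cdot\|_{\theta}$ by \eref{LA} of Remark \ref{Re2}, and the drift part by the smoothing estimate \eref{P3} with $\sigma_1=0,\sigma_2=\theta$ together with \eref{BDC}, the time integral $\int_0^{t(\delta)}(t(\delta)-s)^{-\theta/2}e^{-\lambda_1(t(\delta)-s)/2}\,ds$ being finite since $\theta/2<1$. Applying \eref{P4} to these two parts then produces the factor $(t-t(\delta))^{\theta/2}\leq\delta^{\theta/2}$. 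The genuinely delicate term is the initial-data contribution $(e^{(t-t(\delta))A}-I)e^{t(\delta)A}\xi$, since $\xi$ need not lie in $H^{\theta}$: on the first block $[0,\delta)$ (where $t(\delta)=0$) I would only use the crude bound $|(e^{tA}-I)\xi|\leq 2|\xi|$ and absorb it via $\int_0^{\delta}dt\leq\delta\leq\delta^{\theta/2}$, while on $[\delta,T]$ I would exploit the smoothing $\|e^{t(\delta)A}\xi\|_{\theta}\leq C\,t(\delta)^{-\theta/2}|\xi|$ from \eref{P3} followed by \eref{P4}, and verify that $\int_{\delta}^{T}t(\delta)^{-\theta/2}\,dt\leq C_T$ by a Riemann-sum estimate again using $\theta/2<1$. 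Collecting the three bounds and integrating over $[0,T]$ gives \eref{COX}.
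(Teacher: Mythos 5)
Your proposal is correct and follows essentially the same route as the paper: mild formulation, the smoothing estimates \eref{P3}--\eref{P4} combined with the $H^{\theta}$-regularity of $X^{\vare}$ (handling the non-smooth initial datum via the $t^{-\theta/2}$ singularity and a separate crude bound on $[0,\delta)$), the uniform bound \eref{BDC} for the drift increment, and \cite[(4.12)]{PZ} with $1-e^{-x}\leq Cx^{\alpha\theta/2}$ for the stochastic convolution. The only cosmetic difference is that the paper first passes through the intermediate time $t-\delta$ and estimates two increments of length exactly $\delta$, whereas you bound $|X^{\vare}_t-X^{\vare}_{t(\delta)}|$ directly; both work.
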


\begin{proof}
By (\ref{EX}) and Minkowski's inequality, it is easy to check that
\begin{eqnarray}
\int^{T}_0\left[\mathbb{E}|X_{t}^{\varepsilon}-X_{t(\delta)}^{\varepsilon}|^m\right]^{1/m}dt=\!\!\!\!\!\!\!\!&&\int^{\delta}_0\left(\mathbb{E}|X_{t}^{\varepsilon}-\xi|^m\right)^{1/m}dt
+\int^{T}_{\delta}\left[\mathbb{E}|X_{t}^{\varepsilon}-X_{t(\delta)}^{\varepsilon}|^m\right]^{1/m}dt\nonumber\\
\leq\!\!\!\!\!\!\!\!&&C_{T}\delta\left[1+\left(\EE|\xi|^m\right)^{1/m}\right]+\int^{T}_{\delta}\left(\mathbb{E}|X_{t}^{\varepsilon}-X_{t-\delta}^{\varepsilon}|^m\right)^{1/m}dt\nonumber\\
&&+\int^{T}_{\delta}\left(\mathbb{E}|X_{t(\delta)}^{\varepsilon}-X_{t-\delta}^{\varepsilon}|^m\right)^{1/m}dt.\label{FFX1}
\end{eqnarray}

Recall that the mild solution $X^{\vare}_t$ in \eref{A mild solution}. Then by \eref{P3}, \eref{BDC} and Remark \ref{Re2}, it is easy to see that for any $\theta\in (0,2/\alpha]$, $t\in(0,T]$ and $m\in[p,\alpha)$,
\begin{eqnarray}\label{G1}
\left(\mathbb{E}\|X_{t}^{\varepsilon}\|_{\theta}^{m}\right)^{1/m}\leq\!\!\!\!\!\!\!\!&&\left(\EE\|e^{tA}\xi\|^m_{\theta}\right)^{1/m}+\left[\EE\left|\int^t_0(-A)^{\theta/2}e^{(t-s)A}F(X^{\varepsilon}_s, \mathscr{L}_{X^{\vare}_s}, Y^{\varepsilon}_s)ds\right|^m\right]^{1/m}\nonumber\\
&&+\left[\EE\left\|\int^t_0 e^{(t-s)A}dL_s\right\|^m_{\theta}\right]^{1/m}\nonumber\\
\leq\!\!\!\!\!\!\!\!&&C t^{-\theta/2}\left(\EE|\xi|^{m}\right)^{1/m}+C\int^t_0(t-s)^{-\frac{\theta}{2}}\left(\EE|X^{\varepsilon}_s|^p\right)^{1/p}ds\nonumber\\
&&+C\left( \sum^{\infty}_{k=1}\frac{\beta^{\alpha}_k}{\lambda^{1-\alpha\theta/2}_k}\right)^{1/\alpha}\nonumber\\
\leq\!\!\!\!\!\!\!\!&&C t^{-\theta/2}\left(\EE|\xi|^{m}\right)^{1/m}+C_{T}.
\end{eqnarray}

Note that
$$X_{t}^{\varepsilon}-X_{t-\delta}^{\varepsilon}=(e^{A\delta}-I)X_{t-\delta}^{\varepsilon}+\int_{t-\delta}^{t}e^{(t-s)A}F(X^{\varepsilon}_s,  \mathscr{L}_{X^{\vare}_s},Y^{\varepsilon}_s)ds+\int_{t-\delta}^{t}e^{(t-s)A}dL_s.$$
Then by \eref{P4} and Minkowski's inequality, we have
\begin{eqnarray}  \label{REGX1}
\int^{T}_\delta\left[\mathbb{E}|(e^{A\delta}-I)X_{t-\delta}^{\varepsilon}|^m\right]^{1/m} dt
%\leq\!\!\!\!\!\!\!\!&&C\delta^{\frac{\theta}{2}}\mathbb{E}\left[\int^{T\wedge \tau^{\varepsilon}_R}_\delta\|X^{\varepsilon}_{t-\delta}\|_{\theta}dt\right]\nonumber\\
\leq\!\!\!\!\!\!\!\!&&C\delta^{\frac{\theta}{2}}\int^{T}_\delta\left(\mathbb{E}\|X^{\varepsilon}_{t-\delta}\|^m_{\theta}  \right)^{1/m}dt\nonumber\\
\leq\!\!\!\!\!\!\!\!&&C\delta^{\frac{\theta}{2}}\left[\int^{T}_\delta (t-\delta)^{-\theta/2}dt\left(\EE|\xi|^{m}\right)^{1/m}+C_T\right]\nonumber\\
\leq\!\!\!\!\!\!\!\!&&C_T\delta^{\frac{\theta }{2}}\left[1+\left(\EE|\xi|^{m}\right)^{1/m}\right].
\end{eqnarray}
By \eref{BDC}, it follows
\begin{eqnarray} \label{REGX3}
\int^{T}_\delta \left[\mathbb{E}\left|\int_{t-\delta}^{t}e^{(t-s)A}F(X^{\varepsilon}_s,\mathscr{L}_{X^{\vare}_s},  Y^{\varepsilon}_s)ds\right|^m\right]^{1/m}dt
\leq\!\!\!\!\!\!\!\!&&C_{T}\delta\left[1+\left(\EE|\xi|^{m}\right)^{1/m}\right].
\end{eqnarray}
By \eref{LA} and assumption \ref{B2}, we have
\begin{eqnarray} \label{REGX4}
\int^{T}_\delta \left[\mathbb{E}\left|\int_{t-\delta}^{t}e^{(t-s)A}dL_s \right|^m\right]^{1/m}dt
\leq\!\!\!\!\!\!\!\!&&C\int^{T}_\delta\left[ \sum^{\infty}_{k=1}\frac{\beta^{\alpha}_k(1-e^{-\lambda_k \delta})}{\lambda_i}\right]^{1/\alpha}dt\nonumber\\
\leq\!\!\!\!\!\!\!\!&&C\delta^{\frac{\theta}{2}}\int^{T}_\delta\left(\sum^{\infty}_{k=1}\frac{\beta^{\alpha}_k}{\lambda^{1-\alpha\theta/2}_k}\right)^{1/\alpha}dt
\leq C_T\delta^{\frac{\theta}{2}},
\end{eqnarray}
where we use the fact that $1-e^{-x}\leq Cx^{\alpha\theta /2}$ for any $x>0$.

Combining \eqref{REGX1}-\eqref{REGX4}, we obtain
\begin{eqnarray}\label{FFX2}
\int^{T}_{\delta}\left(\mathbb{E}|X_{t}^{\varepsilon}-X_{t-\delta}^{\varepsilon}|^m\right)^{1/m}dt\leq\!\!\!\!\!\!\!\!&&C_T\delta^{\frac{\theta }{2}}\left[1+\left(\EE|\xi|^{m}\right)^{1/m}\right].
\end{eqnarray}
Similar as the argument above, we also have
\begin{eqnarray}\label{FFX3}
\int^{T}_{\delta}\left[\mathbb{E}|X_{t(\delta)}^{\varepsilon}-X_{t-\delta}^{\varepsilon}|^m\right]^{1/m}dt\leq\!\!\!\!\!\!\!\!&&C_T\delta^{\frac{\theta }{2}}\left[1+\left(\EE|\xi|^{m}\right)^{1/m}\right].
\end{eqnarray}

Finally, \eref{FFX1}, (\ref{FFX2}) and (\ref{FFX3}) imply \eref{COX} holds. The proof is complete.
\end{proof}
%\begin{remark}\label{Re3}
%Usually, the H\"{o}lder continuity of $X_{t}^{\varepsilon}$ with respect to time plays an important role in the method of time discretization (see e.g. \cite[Proposition 4.4]{C1}, \cite[Lemma 3,4]{DSXZ} and \cite[Proposition 9]{GP}), then the initial value $x\in H^{\theta}$ will be assumed for some $\theta>0$. However, the above Lemma \ref{COX} is an estimate of the integral of the time increment of $X_{t}^{\varepsilon}$, which is weaker than the H\"{o}lder continuity but strong enough for our purpose, and it only needs initial value $x\in H$ for advantage. The idea is inspired from \cite[Lemma 3.2]{LRSX2}, where the SPDEs with the locally monotone coefficients driven by Wiener noise were considered. But here only exists the mild solution, which is unlike the strong solution in \cite{LRSX2}, we have to use the approach based on the smoothing properties of the semigroup $e^{At}$ instead of It\^{o} formula, which is different from the proof as in  \cite[Lemma 3.2]{LRSX2}. Meanwhile unlike the solution $X^{\varepsilon}_t$ has finite moment of any order in \cite{LRSX2}, here $X^{\varepsilon}_t$ has no second moment due to the $\alpha$-stable noise, so the technique of stopping time is helpful to deal with the Burger term.
%\end{remark}

\vspace{0.2cm}
Inspired from the idea introduced by Khasminskii in \cite{K1} ,
we construct an auxiliary process
$\hat{Y}_{t}^{\varepsilon}\in H $, i.e., we split $[0,T]$ into some subintervals of size $\delta>0$, where $\delta>0$ depends on $\vare$ and will be chosen later.
With the initial value $\hat{Y}_{0}^{\varepsilon}=Y^{\varepsilon}_{0}=\xi$, we construct the process $\hat{Y}_{t}^{\varepsilon}$ on each time interval $[l\delta,(l+1)\delta\wedge T]$, $l \in \mathbb{N}$,
$$
d\hat{Y}_{t}^{\varepsilon}=\frac{1}{\varepsilon}\left[A\hat{Y}_{t}^{\varepsilon}+G(X_{l\delta}^{\varepsilon},\mathscr{L}_{X^{\vare}_{l\delta}}, \hat{Y}_{t}^{\varepsilon})\right]dt+\frac{1}{\varepsilon^{1/\alpha}}dZ_t, \quad t\in[l\delta,(l+1)\delta\wedge T],
$$
i.e.,
$$
d\hat{Y}_{t}^{\varepsilon}=\frac{1}{\varepsilon}\left[A\hat{Y}_{t}^{\varepsilon}+G(X_{t(\delta)}^{\varepsilon},\mathscr{L}_{X^{\vare}_{t(\delta)}}, \hat{Y}_{t}^{\varepsilon})\right]dt+\frac{1}{\varepsilon^{1/\alpha}}dZ_t,
$$
which satisfies for any $t\in [0,T]$
\begin{align} \label{AuxiliaryPro Y 01}
\hat{Y}_{t}^{\varepsilon}=e^{tA/\vare}\xi+\frac{1}{\varepsilon}\int_{0}^{t}e^{(t-s)A/\vare}G(X_{s(\delta)}^{\varepsilon},\mathscr{L}_{X^{\vare}_{s(\delta)}},\hat{Y}_{s}^{\varepsilon})ds+\frac{1}{\varepsilon^{1/\alpha}}\int_{0}^{t}e^{(t-s)A/\vare}dZ_s.
\end{align}
By following the same argument as in the proof of \eref{EY}, we have
\begin{align}
\sup_{\vare\in(0,1),t\in [0,T]}\left(\mathbb{E}|\hat{Y}_{t}^{\varepsilon}|^{p}\right)^{1/p}\leq C_{T}\left[1+\left(\EE|\xi|^p\right)^{1/p}+\left(\EE|\eta|^{p}\right)^{1/p}\right].\label{EAY}
\end{align}

\begin{lemma} \label{DEY}
For any $T>0$, there exists a constant $C_{T}>0$ such that
$$
\int^{T}_0\left(\EE|Y_{t}^{\varepsilon}-\hat Y_{t}^{\varepsilon}|^m\right)^{1/m} dt
\leq C_T\delta^{\frac{\theta }{2}}\left[1+\left(\EE|\xi|^{m}\right)^{1/m}\right].
$$
\end{lemma}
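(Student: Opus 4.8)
The plan is to exploit the fact that $Y^{\vare}$ and $\hat Y^{\vare}$ are driven by the \emph{same} noise, so that their difference is a pure drift term, and then to close a Gronwall-type inequality using the strong dissipativity $\lambda_1>L_G$. Concretely, I would first subtract the representation of $\hat Y^{\vare}$ in \eref{AuxiliaryPro Y 01} from that of $Y^{\vare}$ in \eref{A mild solution}. The free-evolution term applied to the common initial datum and the stochastic convolution $\vare^{-1/\alpha}\int_0^t e^{(t-s)A/\vare}\,dZ_s$ coincide for the two processes and therefore cancel, leaving
$$
Y^{\vare}_t-\hat Y^{\vare}_t=\frac{1}{\vare}\int_0^t e^{(t-s)A/\vare}\left[G(X^{\vare}_s,\mathscr{L}_{X^{\vare}_s},Y^{\vare}_s)-G(X^{\vare}_{s(\delta)},\mathscr{L}_{X^{\vare}_{s(\delta)}},\hat Y^{\vare}_s)\right]ds.
$$
This cancellation is the structural heart of the argument: since the cylindrical $\alpha$-stable noise possesses only finite moments of order below $\alpha$, there is no hope of controlling the convolution by an isometry, and the auxiliary process is built precisely so that the noise never has to be estimated here.

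Next I would apply the semigroup contraction \eref{P2} in the rescaled form $|e^{tA/\vare}x|\leq e^{-\lambda_1 t/\vare}|x|$ together with the Lipschitz estimate for $G$ in assumption \ref{B1}. Splitting the increment of $G$ into its $(x,\mu)$-part, bounded pathwise by $C|X^{\vare}_s-X^{\vare}_{s(\delta)}|+C\,\mathbb{W}_p(\mathscr{L}_{X^{\vare}_s},\mathscr{L}_{X^{\vare}_{s(\delta)}})$, and its $y$-part, bounded by $L_G|Y^{\vare}_s-\hat Y^{\vare}_s|$, and using $\mathbb{W}_p(\mathscr{L}_{X^{\vare}_s},\mathscr{L}_{X^{\vare}_{s(\delta)}})\leq(\EE|X^{\vare}_s-X^{\vare}_{s(\delta)}|^p)^{1/p}\leq(\EE|X^{\vare}_s-X^{\vare}_{s(\delta)}|^m)^{1/m}$ (valid since $m\geq p$), Minkowski's integral inequality yields, with $\phi(t):=(\EE|Y^{\vare}_t-\hat Y^{\vare}_t|^m)^{1/m}$ and $\psi(s):=(\EE|X^{\vare}_s-X^{\vare}_{s(\delta)}|^m)^{1/m}$,
$$
\phi(t)\leq\frac{C}{\vare}\int_0^t e^{-\lambda_1(t-s)/\vare}\psi(s)\,ds+\frac{L_G}{\vare}\int_0^t e^{-\lambda_1(t-s)/\vare}\phi(s)\,ds.
$$

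Finally I would record that $\sup_{t\in[0,T]}\big[(\EE|Y^{\vare}_t|^m)^{1/m}+(\EE|\hat Y^{\vare}_t|^m)^{1/m}\big]<\infty$ — this follows by repeating the proof of \eref{EY} at the $L^m$ level, which is legitimate because $m<\alpha$ guarantees a finite $m$-th moment for the $\alpha$-stable convolution — so that $\int_0^T\phi(t)\,dt<\infty$. Integrating the displayed inequality over $t\in[0,T]$, interchanging the order of integration by Fubini, and using $\frac{1}{\vare}\int_s^T e^{-\lambda_1(t-s)/\vare}\,dt\leq 1/\lambda_1$ gives
$$
\int_0^T\phi(t)\,dt\leq\frac{C}{\lambda_1}\int_0^T\psi(s)\,ds+\frac{L_G}{\lambda_1}\int_0^T\phi(s)\,ds.
$$
Here the strong dissipativity $\lambda_1-L_G>0$ from assumption \ref{B3} is essential: it lets me absorb the last integral on the left, giving $\int_0^T\phi(t)\,dt\leq\frac{C}{\lambda_1-L_G}\int_0^T\psi(s)\,ds$, and then Lemma \ref{L3.9} (the bound \eref{COX}) supplies $\int_0^T\psi(s)\,ds\leq C_T\delta^{\theta/2}[1+(\EE|\xi|^m)^{1/m}]$, which is exactly the claim. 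The step I expect to require the most care is the control in $\vare$: the prefactor $\vare^{-1}$ in front of the convolution threatens a $\vare^{-1}$ blow-up after integrating in time, and it is only the time-averaging of the exponential kernel combined with $L_G<\lambda_1$ that tames it. Note also that the order $\delta^{\theta/2}$ is inherited \emph{entirely} from the time-regularity of the slow component in Lemma \ref{L3.9}; no separate regularity estimate for the fast component is needed.
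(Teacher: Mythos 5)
Your argument is essentially the paper's own proof: cancel the free evolution and the stochastic convolution in the difference $Y^{\vare}_t-\hat Y^{\vare}_t$, apply \eref{P2} and the Lipschitz/dissipativity structure of $G$ from \ref{B1}, integrate in $t$, use Fubini to reduce the kernel to the factor $1/\lambda_1$, absorb via $\lambda_1-L_G>0$, and invoke Lemma \ref{L3.9}; your order of operations (Minkowski first, then integration in $t$) is in fact the cleaner one for obtaining the quantity $\int_0^T(\EE|Y^{\vare}_t-\hat Y^{\vare}_t|^m)^{1/m}\,dt$ exactly as stated. The one flawed point is your justification of the a priori finiteness needed for the absorption step: you cannot ``repeat the proof of \eref{EY} at the $L^m$ level,'' because the standing assumption is only $\EE|\eta|^p<\infty$, so $Y^{\vare}_t$ and $\hat Y^{\vare}_t$ individually need not have finite $m$-th moments. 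This is easily repaired: the $\eta$-dependent terms cancel in the difference, so finiteness of $\phi(t)=(\EE|Y^{\vare}_t-\hat Y^{\vare}_t|^m)^{1/m}$ follows directly from your displayed Volterra inequality (e.g.\ by a pathwise Gronwall argument applied before taking moments, using only the $L^m$-integrability of $|X^{\vare}_s-X^{\vare}_{s(\delta)}|$), after which the absorption goes through as you describe.
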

\begin{proof}
By the construction of $Y_{t}^{\varepsilon}$ and $\hat{Y}_{t}^{\varepsilon}$, we have
\begin{align*}
Y_{t}^{\varepsilon}-\hat Y_{t}^{\varepsilon}=\frac{1}{\vare}\int^t_0 e^{(t-s)A/\vare}\left[G(X_{s}^{\varepsilon},\mathscr{L}_{X^{\vare}_s},Y_{s}^{\varepsilon})-G(X_{s(\delta)}^{\varepsilon},\mathscr{L}_{X^{\vare}_{s(\delta)}}\hat{Y}_{s}^{\varepsilon})\right]ds.
\end{align*}
Then for any $t>0$,
\begin{align*}
|Y_{t}^{\varepsilon}-\hat Y_{t}^{\varepsilon}|\leq&\frac{1}{\vare}\int^t_0 e^{-\lambda_1(t-s)/\vare}\left[C|X_{s}^{\varepsilon}-X_{s(\delta)}^{\varepsilon}|+C\left(\EE|X_{s}^{\varepsilon}-X_{s(\delta)}^{\varepsilon}|^p\right)^{1/p}\right]ds\nonumber\\
&+\frac{1}{\vare}\int^t_0 e^{-\lambda_1(t-s)/\vare}L_G|Y_{s}^{\varepsilon}-\hat Y_{s}^{\varepsilon}|ds.
\end{align*}

By Fubini's theorem, we have
\begin{align*}
\int^{T}_0|Y_{t}^{\varepsilon}-\hat Y_{t}^{\varepsilon}|dt\leq&\frac{1}{\vare}\int^{T}_0\int^t_0 e^{-\lambda_1(t-s)/\vare}\left[C|X_{s}^{\varepsilon}-X_{s(\delta)}^{\varepsilon}|+\left(\EE|X_{s}^{\varepsilon}-X_{s(\delta)}^{\varepsilon}|^p\right)^{1/p}\right]dsdt\nonumber\\
&+\frac{1}{\vare}\int^{T}_0\int^t_0 e^{-\lambda_1(t-s)/\vare}L_G|Y_{s}^{\varepsilon}-\hat Y_{s}^{\varepsilon}|dsdt\\
=&\frac{C}{\vare}\int^{T}_0\left(\int^{T}_s e^{-\lambda_1(t-s)/\vare}dt\right)|X_{s}^{\varepsilon}-X_{s(\delta)}^{\varepsilon}|ds\nonumber\\
&+\frac{C}{\vare}\int^{T}_0\left(\int^{T}_s e^{-\lambda_1(t-s)/\vare}dt\right)\left(\EE|X_{s}^{\varepsilon}-X_{s(\delta)}^{\varepsilon}|^p\right)^{1/p}ds\nonumber\\
&+\frac{L_G}{\vare}\int^{T}_0\left(\int^{T}_s e^{-\lambda_1(t-s)/\vare}dt\right)|Y_{s}^{\varepsilon}-\hat Y_{s}^{\varepsilon}|ds\\
\leq&\frac{C}{\lambda_1}\int^{T}_0|X_{s}^{\varepsilon}-X_{s(\delta)}^{\varepsilon}|ds+\frac{C}{\lambda_1}\int^{T}_0\left(\EE|X_{s}^{\varepsilon}-X_{s(\delta)}^{\varepsilon}|^p\right)^{1/p}ds+\frac{L_G}{\lambda_1}\int^{T}_0|Y_{s}^{\varepsilon}-\hat Y_{s}^{\varepsilon}|ds.
\end{align*}
By  Minkowski's inequality, Lemma \ref{COX} and $L_G<\lambda_1$, it is easy to see
\begin{align*}
\int^{T}_0\left(\EE|Y_{t}^{\varepsilon}-\hat Y_{t}^{\varepsilon}|^m\right)^{1/m} dt
\leq&C_T\delta^{\frac{\theta }{2}}\left[1+\left(\EE|\xi|^{m}\right)^{1/m}\right].
\end{align*}
The proof is complete.
\end{proof}

\subsection{The frozen and averaged equation}

%The following lemma is used to prove the existence and uniqueness of the solution of corresponding averaged equation, we state it ahead.
%\begin{lemma} \label{L3.17} For any $x_1, x_2\in H$, $y\in H$,  we have
%\begin{align*}
%\sup_{t\geq 0}\|Y^{x_1,y}_t-Y^{x_2,y}_t\|^2\leq (\lambda_1-L_g)^{-1}\|x_1-x_2\|^2.
%\end{align*}
%\end{lemma}
%\begin{proof}
%Note that
%\begin{align*}
%d(Y^{x_1,y}_t-Y^{x_2,y}_t)=A(Y^{x_1,y}_t-Y^{x_2,y}_t)dt+\left[g(x_1, Y^{x_1,y}_t)-g(x_2, Y^{x_2,y}_t)\right]dt.
%\end{align*}
%By Young's inequality, it is easy to see
%\begin{align*}
%&\frac{d}{dt}\|Y^{x_1,y}_t-Y^{x_2,y}_t\|^2\\
%=&2\|Y^{x_1,y}_t-Y^{x_2,y}_t\|^2_1+2\langle g(x_1, Y^{x_1,y}_t)-g(x_2, Y^{x_2,y}_t), Y^{x_1,y}_t-Y^{x_2,y}_t\rangle\\
%\leq&-2\lambda_1\|Y^{x_1,y}_t-Y^{x_2,y}_t\|^2\!\!+2L_g \|Y^{x_1,y}_t-Y^{x_2,y}_t\|^2\\
%&+C\|x_1-x_2\|\|Y^{x_1,y}_t-Y^{x_2,y}_t\|\\
%\leq&-(\lambda_1-L_g)\lambda_1\|Y^{x_1,y}_t-Y^{x_2,y}_t\|^2+C\|x_1-x_2\|^2.
%\end{align*}
%Then the compare theorem yields
%\begin{align*}
%\sup_{t\geq 0}\|Y^{x_1,y}_t-Y^{x_2,y}_t\|^2\leq \int^{\infty}_{0}e^{-(\lambda_1-L_g)s}ds\|x_1-x_2\|^2\leq (\lambda_1-L_g)^{-1}\|x_1-x_2\|^2.
%\end{align*}
%The proof is complete.
%\end{proof}

For any fixed $x\in H$ and $\mu\in \mathscr{P}_p$, we consider the following frozen equation:
\begin{eqnarray}\label{FZE}
\left\{ \begin{aligned}
&dY_{t}=\left[AY_{t}+G(x,\mu,Y_{t})\right]dt+d Z_{t},\\
&Y_{0}=y\in H.
\end{aligned} \right.
\end{eqnarray}
Since $G(x,\mu,\cdot)$ is Lipshcitz continuous,
it is easy to prove that equation \eref{FZE} has a unique mild solution denoted by $Y_{t}^{x,\mu,y}$, which is a time homogeneous Markovian process. By a straightforward computation, it is easy to prove
\begin{eqnarray}
\sup_{t\geq 0}\left(\mathbb{E}|Y_{t}^{x,\mu,y}|^{p}\right)^{1/p}\leq C\left[1+|x|+|y|+\left(\mu|\cdot|^{p}\right)^{1/p}\right].\label{EFY}
\end{eqnarray}

Let $P^{x,\mu}_t$ be the transition semigroup of $Y_{t}^{x,\mu,y}$, that is, for any bounded measurable function $\varphi$ on $H$ and $t \geq 0$,
\begin{align*}
P^{x,\mu}_t \varphi(y)= \mathbb{E} \varphi(Y_{t}^{x,\mu,y}), \quad y \in H.
\end{align*}
The asymptotic behavior of $P^{x,\mu}_t$ has been studied in many literatures, by a minor revision in \cite[Lemma 3.3]{BYY} , we have the
the following result :
\begin{proposition}\label{ergodicity}
For any $x\in H$ and $\mu\in\mathscr{P}_p$, $\{P^{x,\mu}_t\}_{t\geq 0}$ admits a unique invariant measure $\nu^{x,\mu}$. Moreover, for any $t>0$,
\begin{align*}
\left| \mathbb{E} F(x,\mu,Y_{t}^{x,\mu,y})-\int_{H}F(x,\mu,z)\nu^{x,\mu}(dz)\right|
\leq C\left[1+ |x| + \left(\mu(|\cdot|^p)\right)^{1/p} +|y |\right]e^{-(\lambda_1-L_G)t},
\end{align*}
where $C$ is a positive constant which is independent of $t$.
\end{proposition}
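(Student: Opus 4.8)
The plan is to establish the two assertions in order: first the existence and uniqueness of the invariant measure $\nu^{x,\mu}$ for the frozen semigroup $\{P^{x,\mu}_t\}_{t\geq 0}$, and then the exponential ergodicity estimate. The key structural fact we exploit is the \emph{strong dissipativity} condition $\lambda_1 - L_G > 0$ from assumption \ref{B3}, which controls the contraction of two solutions of the frozen equation \eref{FZE} started from different initial points. First I would fix two initial values $y_1, y_2 \in H$ and consider the coupled solutions $Y^{x,\mu,y_1}_t$ and $Y^{x,\mu,y_2}_t$ driven by the \emph{same} noise $Z$. Their difference $D_t := Y^{x,\mu,y_1}_t - Y^{x,\mu,y_2}_t$ satisfies a deterministic (noise-free) mild equation, since the $Z_t$ terms cancel:
\begin{align*}
D_t = e^{tA}(y_1-y_2) + \int_0^t e^{(t-s)A}\left[G(x,\mu,Y^{x,\mu,y_1}_s)-G(x,\mu,Y^{x,\mu,y_2}_s)\right]ds.
\end{align*}
Using the smoothing estimate \eref{P2} and the Lipschitz bound $|G(x,\mu,y_1)-G(x,\mu,y_2)|\leq L_G|y_1-y_2|$ from assumption \ref{B1}, I would derive $|D_t| \leq e^{-\lambda_1 t}|y_1-y_2| + L_G\int_0^t e^{-\lambda_1(t-s)}|D_s|\,ds$, and then a Gronwall-type argument yields the exponential contraction $|Y^{x,\mu,y_1}_t - Y^{x,\mu,y_2}_t| \leq e^{-(\lambda_1-L_G)t}|y_1-y_2|$ almost surely.

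With this pathwise contraction in hand, existence and uniqueness of the invariant measure follows from a standard Cauchy argument: the contraction shows that the laws $\mathscr{L}_{Y^{x,\mu,y}_t}$ form a Cauchy family in $\mathbb{W}_p$ as $t\to\infty$ (uniformly in $y$ over bounded sets), and the moment bound \eref{EFY} guarantees the limit lies in $\mathscr{P}_p$; the limit is the unique invariant measure $\nu^{x,\mu}$, with $\mathbb{W}_p(\mathscr{L}_{Y^{x,\mu,y}_t}, \nu^{x,\mu}) \leq C e^{-(\lambda_1-L_G)t}[1+|y|+|x|+(\mu(|\cdot|^p))^{1/p}]$. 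This is where one invokes \cite[Lemma 3.3]{BYY} with the minor revision that accounts for the distribution-dependence in the fixed parameter $\mu$; since $x,\mu$ are frozen, the argument there goes through verbatim with $G(x,\mu,\cdot)$ playing the role of the drift.

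For the final quantitative estimate, I would write, using invariance of $\nu^{x,\mu}$,
\begin{align*}
\mathbb{E}F(x,\mu,Y^{x,\mu,y}_t) - \int_H F(x,\mu,z)\nu^{x,\mu}(dz) = \int_H \mathbb{E}\left[F(x,\mu,Y^{x,\mu,y}_t) - F(x,\mu,Y^{x,\mu,z}_t)\right]\nu^{x,\mu}(dz),
\end{align*}
then apply the Lipschitz property of $F$ in its third argument (assumption \ref{B1}) to bound the integrand by $C\,\mathbb{E}|Y^{x,\mu,y}_t - Y^{x,\mu,z}_t| \leq C e^{-(\lambda_1-L_G)t}(|y|+|z|)$, and finally integrate against $\nu^{x,\mu}$ using the moment bound $\int_H |z|^p\,\nu^{x,\mu}(dz) \leq C[1+|x|+(\mu(|\cdot|^p))^{1/p}]^p$ inherited from \eref{EFY}. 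The main obstacle I anticipate is not the contraction itself but ensuring the moment bounds are uniform and that the invariant measure genuinely belongs to $\mathscr{P}_p$ despite the noise $Z$ only having finite moments of order below $\alpha$; this forces the entire argument to be carried out in the $L^p$ (rather than $L^2$) framework and is precisely the point where one must rely on the stable-process moment estimate of the type \eref{LA} rather than a second-moment Itô computation.
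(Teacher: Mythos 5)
Your proposal is correct and follows essentially the same route as the paper, which does not write out a proof but instead invokes \cite[Lemma 3.3]{BYY}: the underlying argument there is exactly your synchronous coupling (same noise $Z$, so the stochastic terms cancel), the dissipativity bound $|Y_{t}^{x,\mu,y_1}-Y_{t}^{x,\mu,y_2}|\leq e^{-(\lambda_1-L_G)t}|y_1-y_2|$ via \eref{P2} and Gronwall, a Cauchy-in-$\mathbb{W}_p$ argument for existence and uniqueness of $\nu^{x,\mu}$, and the invariance identity combined with the Lipschitz continuity of $F$ in its third variable and the moment bound \eref{EFY} to produce the stated exponential decay. Your closing remark about working in $L^p$ with $p<\alpha$ rather than $L^2$ correctly identifies the only adaptation needed for the stable-noise setting.
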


Now, we define $\bar{F}(x,\mu):=\int_{H}F(x,\mu,y)\nu^{x,\mu}(dy)$. Let $\bar{X}$ be the solution of the corresponding averaged equation:
\begin{equation}\left\{\begin{array}{l}
\displaystyle d\bar{X}_{t}=\left[A\bar{X}_{t}+\bar{F}(\bar{X}_{t},\mathscr{L}_{\bar{X}_t})\right]dt+d L_{t},\\
\bar{X}_{0}=\xi\in H. \end{array}\right. \label{1.3}
\end{equation}
The well-posedness of equation \eref{1.3} is the following:
\begin{theorem} \label{barX}
Equation \eref{1.3} exists a unique mild solution $\bar{X}_{t}$ which satisfies
\begin{align}
\bar X_t=e^{tA}\xi+\int^t_0e^{(t-s)A} \bar F(\bar X_s,  \mathscr{L}_{\bar X_s})ds+\int^t_0 e^{(t-s)A}dL_s.\label{3.22}
\end{align}
%Moreover, for any $x\in H$, $T>0$ and $1\leq p< \alpha$, there exists a constant $C_{p,T}>0$ such that
%\begin{align} \label{Control X}
%\mathbb{E}\left(\sup_{0\leq t\leq T}\|\bar X_{t} \|^p\right)+\EE\int^T_0 \frac{\|\bar X_{t} \|^2_1}{(\|\bar X_{t} \|^2+1)^{1-p/2}}ds
%\leq  C_{p,T}(1+ \|x\|^p).
%\end{align}
\end{theorem}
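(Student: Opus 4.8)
The plan is to apply Theorem~\ref{main result 1} to the averaged equation \eref{1.3}, which is itself a McKean-Vlasov SPDE of exactly the form \eref{main equation} with the drift $B$ replaced by $\bar F$. Since the noise $L$ and the operator $A$ are unchanged, assumptions \ref{A1} and \ref{A3} are inherited immediately. Thus the entire task reduces to verifying that $\bar F$ satisfies assumption \ref{A2}, namely that $\bar F:H\times\mathscr{P}_p\to H$ is globally Lipschitz in both arguments with respect to $|\cdot|$ and $\mathbb{W}_p$. Once this is established, the existence, uniqueness, and the mild-solution representation \eref{3.22} follow verbatim from the fixed-point argument (Steps 1 and 2) in the proof of Theorem~\ref{main result 1}.

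First I would record the Lipschitz bound for $\bar F$. Recall $\bar F(x,\mu)=\int_H F(x,\mu,z)\,\nu^{x,\mu}(dz)$, where $\nu^{x,\mu}$ is the unique invariant measure of the frozen semigroup $P^{x,\mu}_t$ from Proposition~\ref{ergodicity}. For fixed $(x_1,\mu_1)$ and $(x_2,\mu_2)$, the natural route is to write, for any $y\in H$ and $t>0$,
\begin{eqnarray*}
\bar F(x_i,\mu_i)=\int_H F(x_i,\mu_i,z)\,\nu^{x_i,\mu_i}(dz)=\lim_{t\to\infty}\mathbb{E}\,F(x_i,\mu_i,Y^{x_i,\mu_i,y}_t),
\end{eqnarray*}
using the exponential convergence in Proposition~\ref{ergodicity}. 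The difference $\bar F(x_1,\mu_1)-\bar F(x_2,\mu_2)$ is then controlled by combining the Lipschitz continuity of $F$ in $(x,\mu,y)$ from assumption \ref{B1} with a stability estimate for the frozen solutions $Y^{x_1,\mu_1,y}_t$ and $Y^{x_2,\mu_2,y}_t$ started from the same $y$. That stability estimate is the technical heart: subtracting the two mild equations for the frozen process, using \eref{P2} and the dissipativity $L_G<\lambda_1$ from assumption \ref{B3}, one obtains a Gronwall-type bound of the form $(\mathbb{E}|Y^{x_1,\mu_1,y}_t-Y^{x_2,\mu_2,y}_t|^p)^{1/p}\le \frac{C}{\lambda_1-L_G}[\,|x_1-x_2|+\mathbb{W}_p(\mu_1,\mu_2)\,]$, uniformly in $t$ and $y$. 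Passing to the limit $t\to\infty$ then yields
\begin{eqnarray*}
|\bar F(x_1,\mu_1)-\bar F(x_2,\mu_2)|\leq C\left[\,|x_1-x_2|+\mathbb{W}_p(\mu_1,\mu_2)\,\right],
\end{eqnarray*}
which is precisely assumption \ref{A2} for $\bar F$.

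With the Lipschitz property of $\bar F$ in hand, I would invoke Theorem~\ref{main result 1} directly: it guarantees a unique mild solution $\bar X_t$ of \eref{1.3} satisfying the representation \eref{3.22}, completing the proof. The main obstacle I anticipate is the uniform-in-time stability estimate for the frozen process under simultaneous perturbation of the parameters $(x,\mu)$, since the invariant measure $\nu^{x,\mu}$ depends on $(x,\mu)$ in a way that is only accessible through the long-time limit; the strong dissipativity $\lambda_1-L_G>0$ is exactly what makes this limit exist and makes the constant uniform, so careful bookkeeping of that spectral gap is essential. A minor secondary point is confirming that $\bar F$ inherits a linear growth bound (via \eref{EFY} and \eref{BDC}), but this is immediate from the growth of $F$ and the moment bound \eref{EFY} on the invariant measure.
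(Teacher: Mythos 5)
Your proposal is correct and follows essentially the same route as the paper: reduce the problem to verifying that $\bar F$ satisfies the Lipschitz condition of assumption \ref{A2}, establish that bound by writing $\bar F(x_i,\mu_i)$ as the long-time limit of $\mathbb{E}F(x_i,\mu_i,Y^{x_i,\mu_i,y}_t)$ via Proposition \ref{ergodicity} together with the uniform stability estimate \eref{IncY} for the frozen process, and then invoke the fixed-point argument of Theorem \ref{main result 1}. The only cosmetic difference is that the paper states the stability estimate pathwise and uniformly in $t,y$ rather than as an $L^p$ Gronwall bound, which changes nothing in the argument.
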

\begin{proof}
It is sufficient to check that the $ \bar F(\cdot,\cdot)$ is Lipschitz continuous, then the well-posedness can be easily obtained by following the procedures in Theorem \ref{main result 1}.

Note that it is easy to prove that  $x_1,x_2\in H$, $\mu_1,\mu_2\in \mathscr{P}_{p}$, we have
\begin{eqnarray}
\sup_{t\geq 0,y\in H}|Y^{x_1,\mu_1,y}_t-Y^{x_2,\mu_2,y}_t|\leq C|x_1-x_2|+C\mathbb{W}_p(\mu_1,\mu_2),\label{IncY}
\end{eqnarray}
Then by \eref{IncY} and Proposition \ref{ergodicity}, we get
\begin{align*}
|\bar{F}(x_1,\mu_1)-\bar{F}(x_2,\mu_2)|=&\left|\int_{H} F(x_1,\mu_1,z)\nu^{x_1,\mu_1}(dz)-\int_{H} F(x_2,\mu_2,z)\nu^{x_2,\mu_2}(dz)\right|\\
\leq&\left|\int_{H} F(x_1,\mu_1,z)\nu^{x_1,\mu_1}(dz)-\EE F(x_1, \mu_1,Y^{x_1,\mu_1,y}_t)\right|\\
&+\left|\EE F(x_1, \mu_1,Y^{x_1,\mu_1,y}_t)-\EE F(x_2, \mu_2,Y^{x_2,\mu_2,y}_t)\right|\\
&+\left|\EE F(x_2, \mu_2,Y^{x_2,\mu_2,y}_t)-\int_{H} F(x_2,\mu_2, z)\nu^{x_2,\mu_2}(dz)\right|\\
\leq&C\left[1+|x_1|+|x_2|+ \left(\mu_1(|\cdot|^p)\right)^{1/p}+ \left(\mu_2(|\cdot|^p)\right)^{1/p}+|y|\right]e^{-(\lambda_1-L_F) t}\\
&+C\left(|x_1-x_2|+\mathbb{W}_p(\mu_1,\mu_2)+\EE|Y^{x_1,\mu_1,y}_t-Y^{x_2,\mu_2,y}_t|\right)\\
\leq&C\left[1+|x_1|+|x_2|+ \left(\mu_1(|\cdot|^p)\right)^{1/p}+ \left(\mu_2(|\cdot|^p)\right)^{1/p}+|y|\right]e^{-(\lambda_1-L_F) t}\\
&+C|x_1-x_2|+C\mathbb{W}_p(\mu_1,\mu_2).
\end{align*}
Hence by letting $t\rightarrow \infty$, it follows
$$
|\bar{F}(x_1,\mu_1)-\bar{F}(x_2,\mu_2)|\leq C|x_1-x_2|+C\mathbb{W}_p(\mu_1,\mu_2).
$$
The proof is complete.
\end{proof}
\begin{remark}
Under the condition \eref{BDC}, it is easy to check that
\begin{eqnarray}
\sup_{x\in H}|\bar{F}(x,\mu)|\leq C\left[1+\left(\mu(|\cdot|^p)\right)^{1/p}\right].\label{barBDC}
\end{eqnarray}
\end{remark}
%By a standard argument (see e.g., \cite[Lemma 3.9]{SZ}), it is easy to see that the averaged coefficient $\bar{B}$ is Lipschitz, i.e.,
%$$
%\left|\bar{B}(x_1,\mu_1)-\bar{B}(x_2,\mu_2)\right|\leq C(|x_1-x_2|+\mathbb{W}_p(\mu_1,\mu_2)).
%$$
%Then the averaged equation \eref{AE} exists a unique mild solution $\bar{X}_{t}$.
%By the same argument in Lemmas \ref{PMY} and \ref{COX}, we have the following estimates:
%\begin{lemma} \label{barX}
%For any $x,y\in H$, $1\leq p<\alpha$ and $T>0$, there exist constants $C_{p,T}, C_{p,T,R}>0$ such that
%\begin{eqnarray} \label{Control barX}
%\sup_{t\in [0,T]}\mathbb{E}|\bar X_{t}|^p\leq  C_{p,T}(1+ |x| ^p),
%\end{eqnarray}
%\begin{eqnarray} \label{Control barX1}
%\mathbb{E}\left[\int^{T}_0|\bar X_t-\bar X_{t(\delta)}|dt\right]^{p}\leq C_{p,T}(|x|^p+1)\delta^{\frac{p\theta}{2}}.
%\end{eqnarray}
%\end{lemma}

\subsection{The proof of Theorem \ref{main result 2}} \label{sub 3.3}

In this subsection, we will give the detailed proof of Theorem \ref{main result 2}.

\begin{proof} It is easy to see that
\begin{eqnarray*}
X_{t}^{\vare}-\bar{X}_{t}=\!\!\!\!\!\!\!\!&&\int^t_0 e^{(t-s)A}\big[F(X^{\varepsilon}_s, \mathscr{L}_{X^{\vare}_s},Y^{\varepsilon}_s)-\bar F(\bar X_{s},\mathscr{L}_{\bar{X}_s})\big]ds\\
=\!\!\!\!\!\!\!\!&&\int^t_0 e^{(t-s)A}\big[F(X^{\varepsilon}_s, \mathscr{L}_{X^{\vare}_s},Y^{\varepsilon}_s)-F(X^{\varepsilon}_{s(\delta)}, \mathscr{L}_{X^{\vare}_{s(\delta)}},\hat Y^{\varepsilon}_s)\big]ds\\
&&+\int^t_0 e^{(t-s)A}\big[\bar F(X^{\vare}_{s},\mathscr{L}_{X^{\vare}_{s}})-\bar F(\bar X_{s},\mathscr{L}_{\bar{X}_{s}})\big]ds\\
&&+\int^t_0 e^{(t-s)A}\big[\bar F(X^{\vare}_{s(\delta)},\mathscr{L}_{X^{\vare}_{s(\delta)}})-\bar F(X^{\vare}_{s},\mathscr{L}_{X^{\vare}_{s}})\big]ds\\
&&+\int^t_0 e^{(t-s)A}\big[F(X^{\varepsilon}_{s(\delta)}, \mathscr{L}_{X^{\vare}_{s(\delta)}},\hat Y^{\varepsilon}_s)-\bar F(X^{\vare}_{s(\delta)},\mathscr{L}_{X^{\vare}_{s(\delta)}})\big]ds.
\end{eqnarray*}
Then by Lemmas \ref{L3.9} and \ref{DEY}, we have
\begin{eqnarray*}
&&\left[ \mathbb{E}\left(\sup_{t\in [0,T]}|X_{t}^{\vare}-\bar{X}_{t}|^{m}\right)\right]^{1/m}\\
\leq\!\!\!\!\!\!\!\!&&\left[\EE\left|\int^T_0\left|F(X^{\varepsilon}_s, \mathscr{L}_{X^{\vare}_s},Y^{\varepsilon}_s)-F(X^{\varepsilon}_{s(\delta)}, \mathscr{L}_{X^{\vare}_{s(\delta)}},\hat Y^{\varepsilon}_s)\right|ds\right|^m\right]^{1/m}\\
&&+\left[\EE\left|\int^T_0\left|\bar F(X^{\vare}_{s},\mathscr{L}_{X^{\vare}_{s}})-\bar F(\bar X_{s},\mathscr{L}_{\bar{X}_{s}})\right|ds\right|^m\right]^{1/m}\\
&&+\left[\EE\left|\int^T_0\left|\bar F(X^{\vare}_{s(\delta)},\mathscr{L}_{X^{\vare}_{s(\delta)}})-\bar F(X^{\vare}_{s},\mathscr{L}_{X^{\vare}_{s}})\right|ds\right|^m\right]^{1/m}\\
&&+\left[\EE\left|\sup_{t\in [0,T]}\int^t_0 e^{(t-s)A}\big[F(X^{\varepsilon}_{s(\delta)}, \mathscr{L}_{X^{\vare}_{s(\delta)}},\hat Y^{\varepsilon}_s)-\bar F(X^{\vare}_{s(\delta)},\mathscr{L}_{X^{\vare}_{s(\delta)}})\big]ds\right|^m\right]^{1/m}\\
\leq\!\!\!\!\!\!\!\!&&C\int^T_0\EE\left[\left|X^{\varepsilon}_s-X^{\varepsilon}_{s(\delta)}\right|^m\right]^{1/m}+\left[\EE\left|Y^{\varepsilon}_s-\hat Y^{\varepsilon}_s\right|^m\right]^{1/m}+\left[\EE|X^{\varepsilon}_s-X^{\varepsilon}_{s(\delta)}|^p\right]^{1/p}ds\\
&&+C\int^T_0\left(\EE|X_{s}^{\vare}-\bar{X}_{s}|^{m}\right)^{1/m}+\left(\EE|X_{s}^{\vare}-\bar{X}_{s}|^{p}\right)^{1/p}ds\\
&&+\left[\EE\left|\sup_{t\in [0,T]}\int^t_0 e^{(t-s)A}\big[F(X^{\varepsilon}_{s(\delta)}, \mathscr{L}_{X^{\vare}_{s(\delta)}},\hat Y^{\varepsilon}_s)-\bar F(X^{\vare}_{s(\delta)},\mathscr{L}_{X^{\vare}_{s(\delta)}})\big]ds\right|^m\right]^{1/m}\\
\leq\!\!\!\!\!\!\!\!&&C\int^T_0\left(\EE|X_{t}^{\vare}-\bar{X}_{t}|^{m}\right)^{1/m} dt+C_{T}\left[\left(\EE|\xi|^m\right)^{1/m}+1\right]\delta^{\frac{\theta}{2}}\\
&&+\left[\EE\left|\sup_{t\in [0,T]}\int^t_0 e^{(t-s)A}\big[F(X^{\varepsilon}_{s(\delta)}, \mathscr{L}_{X^{\vare}_{s(\delta)}},\hat Y^{\varepsilon}_s)-\bar F(X^{\vare}_{s(\delta)},\mathscr{L}_{X^{\vare}_{s(\delta)}})\big]ds\right|^m\right]^{1/m}.
\end{eqnarray*}
Then by Gronwall's inequality we obtain that
\begin{eqnarray}
\left[ \mathbb{E}\left(\sup_{t\in [0,T]}|X_{t}^{\vare}-\bar{X}_{t}|^{m}\right)\right]^{1/m}
\leq\!\!\!\!\!\!\!\!&&C_{T}\left[\left(\EE|\xi|^m\right)^{1/m}+1\right]\delta^{\frac{\theta}{2}}+C_{T}J(T,m,\vare,\delta),\label{F3.17}
\end{eqnarray}
where
$$J(T,m,\vare,\delta):=\left\{\EE\left|\sup_{t\in [0,T]}\int^t_0 e^{(t-s)A}\big[F(X^{\varepsilon}_{s(\delta)}, \mathscr{L}_{X^{\vare}_{s(\delta)}},\hat Y^{\varepsilon}_s)-\bar F(X^{\vare}_{s(\delta)},\mathscr{L}_{X^{\vare}_{s(\delta)}})\big]ds\right|^m\right\}^{1/m}.$$
Then by \eref{BDC} and \eref{barBDC}, we have
\begin{eqnarray*}
&&J(T,m,\vare,\delta)\nonumber\\
\leq\!\!\!\!\!\!\!\!&&\left\{\EE\left|\sup_{t\in [0,T]}\int^t_0 e^{(t-s)A}\big[F(X^{\varepsilon}_{s(\delta)}, \mathscr{L}_{X^{\vare}_{s(\delta)}},\hat Y^{\varepsilon}_s)-\bar F(X^{\vare}_{s(\delta)},\mathscr{L}_{X^{\vare}_{s(\delta)}})\big]ds\right|^2\right\}^{1/2}\\
\leq\!\!\!\!\!\!\!\!&&C\left\{\EE\sup_{t\in [0,T]}\left|
\sum^{t(\delta)-1}_{l=0}e^{(t-(l+1)\delta)A}\int_{l\delta}^{(l+1)\delta}\!\!\!\!e^{((l+1)\delta-s)A}\big[F(X^{\varepsilon}_{l\delta}, \mathscr{L}_{X^{\vare}_{l\delta}},\hat Y^{\varepsilon}_s)-\bar F(X^{\vare}_{l\delta},\mathscr{L}_{X^{\vare}_{l\delta}})\big]ds\right|^2\right\}^{1/2}\nonumber\\
&&+C\left\{\EE\sup_{t\in [0,T]}\left|\int_{t(\delta)}^{t}e^{(t-s)A}\big[F(X^{\varepsilon}_{t(\delta)}, \mathscr{L}_{X^{\vare}_{t(\delta)}},\hat Y^{\varepsilon}_s)-\bar F(X^{\vare}_{t(\delta)},\mathscr{L}_{X^{\vare}_{t(\delta)}})\big] ds\right|^{2}\right\}^{1/2}\nonumber\\
\leq\!\!\!\!\!\!\!\!&&C\sum_{l=0}^{[T/\delta]-1}
\left\{\EE\left|\int_{l\delta}^{(l+1)\delta}e^{((l+1)\delta-s)A}\big[F(X^{\varepsilon}_{l\delta}, \mathscr{L}_{X^{\vare}_{l\delta}},\hat Y^{\varepsilon}_s)-\bar F(X^{\vare}_{l\delta},\mathscr{L}_{X^{\vare}_{l\delta}})\big]ds\right|^{2}\right\}^{1/2}+C_T\delta\nonumber\\
\leq\!\!\!\!\!\!\!\!&&\frac{C_T\vare}{\delta}\max_{0\leq l\leq[T/\delta]-1}\left[\EE\left|\int_{0}^{\frac{\delta}{\vare}}e^{(\delta-s\vare)A}
F(X_{l\delta}^{\vare},\mathscr{L}_{X^{\vare}_{l\delta}},\hat{Y}_{s\vare+l\delta}^{\vare})-\bar{F}(X_{l\delta}^{\vare},\mathscr{L}_{X^{\vare}_{l\delta}})ds\right|^2\right]^{1/2}+C_T\delta\nonumber\\
\leq\!\!\!\!\!\!\!\!&&\frac{C_{T}\vare}{\delta}\max_{0\leq l\leq[T/\delta]-1}\left[\int_{0}^{\frac{\delta}{\vare}}
\int_{r}^{\frac{\delta}{\vare}}\Psi_{l}(s,r)dsdr\right]^{1/2}+C_T\delta,
\end{eqnarray*}
where for any $0\leq r\leq s\leq \frac{\delta}{\vare}$ and $l=0,1,\ldots, [T/\delta]-1$,
\begin{eqnarray*}
\Psi_{l}(s,r):=\!\!\!\!\!\!\!\!&&\mathbb{E}
\big\langle e^{(\delta-s\vare)A}\left[F(X_{l\delta}^{\vare},\mathscr{L}_{X^{\vare}_{l\delta}},\hat{Y}_{s\vare+l\delta}^{\vare})-\bar{F}(X_{l\delta}^{\vare},\mathscr{L}_{X^{\vare}_{l\delta}})\right],\nonumber\\
&&\quad\quad\quad e^{(\delta-r\vare)A}\left[F(X_{l\delta}^{\vare},\mathscr{L}_{X^{\vare}_{l\delta}},\hat{Y}_{r\vare+l\delta}^{\vare})-\bar{F}(X_{l\delta}^{\vare},\mathscr{L}_{X^{\vare}_{l\delta}})\right]\big\rangle.
\end{eqnarray*}
Refer to  Appendix, the following estimation holds:
\begin{eqnarray}
\Psi_{l}(s,r)\leq C_T\left[1+\left(\EE|\xi|^p\right)^{2/p}+\left(\EE|\eta|^p\right)^{2/p}\right]e^{-(\lambda_1-L_G)(s-r)},\quad \forall l=0,1,\ldots, [T/\delta]-1,\label{F3.27}
\end{eqnarray}
As a result, we obtain
\begin{eqnarray}
J(T,m,\vare,\delta)\leq C_{T}\left[1+\left(\EE|\xi|^p\right)^{1/p}+\left(\EE|\eta|^p\right)^{1/p}\right]\left(\frac{\vare^{1/2}}{\delta^{1/2}}+\delta\right).\label{J4}
\end{eqnarray}
By \eref{F3.17} and \eref{J4}, it is easy to see
\begin{eqnarray*}
\left[\EE\left(\sup_{t\in [0,T]}|X_{t}^{\vare}-\bar{X}_{t}|^m\right)\right]^{1/m}
\leq\!\!\!\!\!\!\!\!&&C_{T}\left[1+\left(\EE|\xi|^m\right)^{1/m}+\left(\EE|\eta|^{p}\right)^{1/p}\right]\left(\delta^{\frac{\theta}{2}}+\frac{\vare^{1/2}}{\delta^{1/2}}+\delta\right).
\end{eqnarray*}
Finally, taking $\delta=\vare^{\frac{1}{1+\theta}}$, then we obtain that
$$
\left[\EE\left(\sup_{t\in [0,T]}|X_{t}^{\vare}-\bar{X}_{t}|^m\right)\right]^{1/m}
\leq C_{T}\left[1+\left(\EE|\xi|^m\right)^{1/m}+\left(\EE|\eta|^{p}\right)^{1/p}\right]\vare^{\frac{\theta}{2(1+\theta)}}.
$$
The proof is complete.
\end{proof}
\begin{remark}
Note that under the condition \eref{BDC}, it follows
$$\sup_{t\in [0,T]}|X_{t}^{\vare}-\bar{X}_{t}|\leq \int^T_0|F(X^{\varepsilon}_t, \mathscr{L}_{X^{\vare}_t},Y^{\varepsilon}_t)-\bar F(\bar X_{t},\mathscr{L}_{\bar{X}_t})|dt\leq C_T.$$
Thus for any $T>0$ and $k\geq 1$, we have
$$
\lim_{\vare\rightarrow 0} \mathbb{E}\left[\sup_{t\in [0,T]}|X_{t}^{\vare}-\bar{X}_{t}|^{k}\right]=0.
$$
\end{remark}

\section{Appendix}

In this section, we give the detailed proof of \eref{F3.27}.\\
\textbf{The proof of \eref{F3.27}: } For any $\mu\in\mathscr{P}_p$ and random variables $\tilde{\xi},\tilde{\eta}\in\mathcal{F}_s$, we consider the following equation
\begin{eqnarray*}
d\tilde{Y}^{\vare,s,\tilde{\xi},\mu,\tilde{\eta}}_t=\frac{1}{\vare}A\tilde{Y}^{\vare,s,x,\mu,y}_t dt+\frac{1}{\vare}G(\tilde{\xi},\mu,\tilde{Y}^{\vare,s,\tilde{\xi},\mu,\tilde{\eta}}_t)dt+\frac{1}{\vare^{1/\alpha}}dZ_t,\quad t\geq s.
\end{eqnarray*}
with $\tilde{Y}^{\vare,s,\tilde{\xi},\mu,\tilde{\eta}}_s=\tilde{\eta}$. Then by the construction of $\hat{Y}_{t}^{\vare}$, for any $l\in \mathbb{N}_{+}$, we have
$$
\hat{Y}_{t}^{\vare}=\tilde Y^{\vare,l\delta,X_{k\delta}^{\vare},\mathscr{L}_{X^{\vare}_{k\delta}},\hat{Y}_{k\delta}^{\vare}}_t,\quad t\in[l\delta,(l+1)\delta],
$$
which implies
\begin{eqnarray*}
\Psi_{l}(s,r)=\!\!\!\!\!\!\!\!&&\mathbb{E}
\left\langle e^{(\delta-s\vare)A}\left[F(X_{l\delta}^{\vare},\mathscr{L}_{X^{\vare}_{l\delta}},\tilde Y^{\vare,l\delta,X_{k\delta}^{\vare},\mathscr{L}_{X^{\vare}_{k\delta}},\hat{Y}_{k\delta}^{\vare}}_{s\vare+l\delta})-\bar{F}(X_{l\delta}^{\vare},\mathscr{L}_{X^{\vare}_{l\delta}})\right]\right.,\nonumber\\
&&\quad\quad\quad \left.e^{(\delta-r\vare)A}\left[F(X_{l\delta}^{\vare},\mathscr{L}_{X^{\vare}_{l\delta}},\tilde Y^{\vare,l\delta,X_{k\delta}^{\vare},\mathscr{L}_{X^{\vare}_{k\delta}},\hat{Y}_{k\delta}^{\vare}}_{r\vare+l\delta})-\bar{F}(X_{l\delta}^{\vare},\mathscr{L}_{X^{\vare}_{l\delta}})\right]\right\rangle.\nonumber\\
\end{eqnarray*}

Note that for any fixed $x,y\in H$ and $\mu\in \mathscr{P}_p$, $\tilde Y^{\vare, l\delta, x, \mu, y}_{s\vare+l\delta}$ is independent of $\mathcal{F}_{l\delta}$. $X_{l\delta}^{\vare}$ and $\hat Y_{l\delta}^{\vare}$ are $\mathcal{F}_{l\delta}$-measurable, we have
\begin{eqnarray*}
\Psi_{l}(s,r)=\!\!\!\!\!\!\!\!&&\EE\Bigg\{\mathbb{E}\left[
\left \langle e^{(\delta-s\vare)A}\left(F\left ( X_{l\delta}^{\vare},\mathscr{L}_{X^{\vare}_{l\delta}},\tilde {Y}^{\vare, l\delta, X_{l\delta}^{\vare},\mathscr{L}_{X^{\vare}_{l\delta}}, \hat Y_{l\delta}^{\vare}}_{s\vare+l\delta}\right )-\bar{F}\left (X_{l\delta}^{\vare},\mathscr{L}_{X^{\vare}_{l\delta}}\right )\right),\right.\right.\\
&&\quad\quad\quad \left.\left.e^{(\delta-r\vare)A}\left(F\left (X_{l\delta}^{\vare},\mathscr{L}_{X^{\vare}_{l\delta}},\tilde{Y}^{\vare, l\delta, X_{l\delta}^{\vare}, \mathscr{L}_{X^{\vare}_{l\delta}},\hat Y_{k\delta}^{\vare}}_{r\vare+l\delta}\right )-\bar{F}\left (X_{l\delta}^{\vare},\mathscr{L}_{X^{\vare}_{l\delta}}\right )\right)\right \rangle \Big|\mathcal{F}_{l\delta}\right](\omega)\Bigg\}\\
=\!\!\!\!\!\!\!\!&&\mathbb{E}\Big\{\mathbb{E}
\big\langle e^{(\delta-s\vare)A}\left[F(x,\mu,\tilde{Y}^{\vare,l\delta,x,\mu,y}_{s\vare+l\delta})-\bar{F}(x,\mu)\right],\nonumber\\
&&\quad\quad\quad e^{(\delta-r\vare)A}\left[F(x,\mu,\tilde{Y}^{\vare,l\delta,x,\mu,y}_{r\vare+l\delta})-\bar{F}(x,\mu)\right]\big\rangle\mid_{(x,\mu,y)=(X_{l\delta}^{\vare},\mathscr{L}_{X^{\vare}_{l\delta}},\hat{Y}_{l\delta}^{\vare})}\Big\}.
\end{eqnarray*}
By the construction of the process $\{\tilde{Y}^{\vare,s,x,\mu,y}_t\}_{t\geq s}$, which has the integrated form:
\begin{eqnarray}
\tilde{Y}^{\vare,l\delta,x,\mu,y}_{s\vare+l\delta}\!=\!\!\!\!\!\!\!\!&&y\!+\!\frac{1}{\vare}\int^{s\vare+l\delta}_{l\delta}\!\! A\tilde{Y}^{\vare,l\delta,x,\mu,y}_rdr+\!\frac{1}{\vare}\int^{s\vare+l\delta}_{l\delta}\!\! G\left (x,\mu,\tilde{Y}^{\vare,l\delta,x,\mu,y}_r\right )dr\!+\!\frac{1}{\vare^{1/\alpha}}\int^{s\vare+l\delta}_{l\delta}dZ_r\nonumber\\
=\!\!\!\!\!\!\!\!&&y\!+\int^{s}_{0}\!\! A\tilde{Y}^{\vare,l\delta,x,\mu,y}_{r\vare+l\delta}dr+\int^{s}_{0}\!\! G\left (x,\mu,\tilde{Y}^{\vare,l\delta,x,\mu,y}_{r\vare+l\delta}\right )dr\!+\int^{s}_{0}d\hat{Z}_r,\label{E3.15}
\end{eqnarray}
where $\{\hat Z_t:=\frac{1}{\vare^{1/\alpha}}(Z_{t\vare+l\delta}-Z_{l\delta})\}_{t\geq 0}$, which is also a  cylindrical $\alpha$-stable process. Recall the solution of the frozen equation satisfies
\begin{eqnarray}
Y_{s}^{x, \mu,y}= y\!+\int^{s}_{0}\!\! AY_{r}^{x, \mu,y}dr+\int^{s}_{0}\!\!G\left (x,\mu,Y_{r}^{x, \mu,y}\right )dr\!+\int^{s}_{0}dZ_r.  \label{E3.16}
\end{eqnarray}
The uniqueness of the solutions of equation (\ref{E3.15}) and equation (\ref{E3.16}) implies
that the distribution of $\{\tilde Y^{\vare, l\delta, x,\mu,y}_{s\vare+l\delta}\}_{0\leq s\leq \delta/\vare}$
coincides with the distribution of $\{Y_{s}^{x, \mu,y}\}_{0\leq s\leq \delta/\vare}$.

%For fixed $x,y\in H$ and $\mu\in \mathscr{P}_p$, define
%$$
%\tilde{\mathcal{F}}_s:=\sigma\{ Y_{u}^{x,\mu,y},u\leq s\}.
%$$
Hence by Markov property, Proposition \ref{ergodicity}, \eref{BDC}, \eref{EAY} and \eref{EFY}, we have
\begin{eqnarray*}
\Psi_{l}(s,r)
=\!\!\!\!\!\!\!\!&&\mathbb{E}\Big\{\mathbb{E}
\big\langle e^{(\delta-s\vare)A}\left[F(x,\mu,Y^{x,\mu,y}_s)-\bar{F}(x,\mu)\right],\nonumber\\
&&\quad\quad\quad e^{(\delta-r\vare)A}\left[F(x,\mu,Y^{x,\mu,y}_{r})-\bar{F}(x,\mu)\right]\big\rangle\mid_{(x,\mu,y)=(X_{l\delta}^{\vare},\mathscr{L}_{X^{\vare}_{l\delta}},\hat{Y}_{l\delta}^{\vare})}\Big\}\nonumber\\
=\!\!\!\!\!\!\!\!&&\mathbb{E}\left\{\EE\left[\left\langle e^{(\delta-s\varepsilon)A}
\EE \big[F(x,\mu,Y^{x,\mu,y}_{s})-\bar{F}(x,\mu)\mid \mathcal{F}_{r}\big],\right.\right.\right.\\
&&\quad\quad\quad \left.\left.\left.e^{(\delta-r\varepsilon)A}\big(F(x,\mu,Y^{x,\mu,y}_{r})-\bar{F}(x,\mu)\big)\right\rangle\right]\mid_{(x,\mu,y)=(X_{l\delta}^{\varepsilon},\mathscr{L}_{X^{\vare}_{l\delta}},\hat{Y}^{\vare}_{l\delta})}\right\}\nonumber\\
\leq\!\!\!\!\!\!\!\!&&C\mathbb{E}\left\{\EE\left[|\EE F(x,\mu,Y^{x,\mu,z}_{s-r})-\bar{F}(x,\mu)|\mid_{z=Y_{r}^{x,\mu,y}}\right]\mid_{(x,\mu,y)=(X_{l\delta}^{\varepsilon},\mathscr{L}_{X^{\vare}_{l\delta}},\hat Y_{l\delta}^{\varepsilon})}\right\}\nonumber\\
\leq\!\!\!\!\!\!\!\!&&C\mathbb{E}\left\{\EE\left[1+|x|+\left(\mu(|\cdot|^p)\right)^{1/p}+|Y^{x,\mu,y}_{r}|\right]\left[1+\left(\mu(|\cdot|^p)\right)^{1/p}\right]\right.\\
&&\quad\quad\quad\quad \left.\cdot e^{-(\lambda_1-L_F)(s-r)}\mid_{(x,\mu,y)=(X_{l\delta}^{\varepsilon},\mathscr{L}_{X^{\vare}_{l\delta}},\hat Y_{l\delta}^{\varepsilon})}\right\}\nonumber\\
\leq\!\!\!\!\!\!\!\!&&C\mathbb{E}\left(1+|X_{l\delta}^{\varepsilon}|+\left(\EE|X^{\vare}_{l\delta}|^p\right)^{1/p}+|\hat Y_{l\delta}^{\varepsilon}|\right)\left[1+\left(\EE|X^{\vare}_{l\delta}|^p\right)^{1/p}\right]e^{-(\lambda_1-L_F)(s-r)}\nonumber\\
\leq\!\!\!\!\!\!\!\!&&C_T\left[1+\left(\EE|\xi|^p\right)^{2/p}+\left(\EE|\eta|^p\right)^{2/p}\right]e^{-(\lambda_1-L_G)(s-r)},
\end{eqnarray*}
which completes the proof.

\vspace{0.3cm}
\textbf{Acknowledgment}. This work is supported by the National Natural Science Foundation of China (11801233, 11771187, 11931004, 12090011), the QingLan Project and the Priority Academic Program Development of Jiangsu Higher Education Institutions.

\end{document}